\theoremstyle{plain}
\theoremstyle{plain}
\newtheorem{theorem}{Theorem}[section]
\newtheorem{proposition}[theorem]{Proposition}
\newtheorem{lemma}[theorem]{Lemma}
\newtheorem{corollary}[theorem]{Corollary}
\theoremstyle{definition}
\newtheorem{definition}[theorem]{Definition}
\newtheorem{example}[theorem]{Example}
\theoremstyle{remark}
\newtheorem{remark}[theorem]{Remark}
\let\olddownarrow\downarrow 
\renewcommand{\downarrow}{\mathord{\olddownarrow}}
\let\olduparrow\uparrow 
\renewcommand{\uparrow}{\mathord{\olduparrow}}
\def\e#1{\emph{#1}}
\def\i#1{\textit{#1}}
\def\b#1{\textbf{#1}}
\def\t#1{\text{#1}}
\def\mc#1{\mathcal{#1}}
\def\sub{\subseteq}
\def\dua{\twoheaduparrow}
\def\dda{\twoheaddownarrow}
\def\duam{\twoheaduparrow_{\mathcal{M}}}
\def\ddam{\twoheaddownarrow_{\mathcal{M}}}
\def\duamn{\twoheaduparrow_{\mathcal{MN}}}
\def\ddamn{\twoheaddownarrow_{\mathcal{MN}}}
\def\duatr{\rotatebox[origin=c]{90}{$-\!\!\!-\!\!\!\!\!\smalltriangleright$}}
\def\ddatr{\rotatebox[origin=c]{270}{$-\!\!\!-\!\!\!\!\!\smalltriangleright$}}
\def\duatrmn{\duatr_{\mc{MN}}}
\def\ddatrmn{\ddatr_{\mc{MN}}}
\def\llm{\ll_{\mathcal{M}}}
\def\llmn{\ll_{\mathcal{MN}}}
\def\trmn{\triangleleft_\mathcal{MN}}
\def\mconv{\xrightarrow{\mathcal{M}}}
\def\mnconv{\xrightarrow{\mathcal{MN}}}
\newcommand{\Dir}{\operatorname{Dir}}
\newcommand{\Filt}{\operatorname{Filt}}
\newcommand{\Irr}{\operatorname{Irr}}
\newcommand{\Red}{\operatorname{Red}}
\newcommand{\ub}{\operatorname{ub}}
\newcommand{\lb}{\operatorname{lb}}
\newcommand{\op}{\operatorname{op}}
\def\himp#1{\{#1\}} 
\newcommand{\brackets}[1]{\left(#1\right)}
\newcommand{\sets}[1]{\left\{#1\right\}}
\newcommand{\elb}{\operatorname{elb}}
\def\elbb#1{\elb\brackets{#1}}
\begin{document}

\title[Generalised Net Convergence Structures]{Generalised Net Convergence Structures in Posets}

\author[H. Andradi]{Hadrian Andradi}
\address{\noindent Hadrian Andradi \newline\indent National Institute of Education\newline\indent Nanyang Technological University\newline\indent 1 Nanyang Walk, Singapore 637616,\newline\indent Department of Mathematics\newline\indent Faculty of Mathematics and Natural Sciences\newline\indent Universitas Gadjah Mada, Indonesia 55281}
\email{hadrian.andradi@gmail.com}
\thanks{The first author is supported by Nanyang Technological University Research Scholarship}

\author[W.K. Ho]{Weng Kin Ho}
\address{Weng Kin Ho\newline\indent National Institute of Education\newline\indent Nanyang Technological University\newline\indent 1 Nanyang Walk, Singapore 637616}
\email{wengkin.ho@nie.edu.sg}
\subjclass[2010]{06A06, 54A20}
\keywords{$\mc{M}$-convergence, $\mc{M}$-continuous poset, $\mc{MN}$-convergence, $\mc{MN}$-continuous poset}

\maketitle

\begin{abstract}
In this paper, we introduce the notion of $\mc{M}$-convergence and $\mc{MN}$-convergence structures in posets, which, in some sense, generalise the well-known Scott-convergence and order-convergence structures. As results, we give a necessary and sufficient conditions for each generalised convergence structures being topological. These results then imply the following two well-established results: (1) The Scott-convergence structure in a poset $P$ is topological if and only if $P$ is continuous, and (2) The order-convergence structure in a poset $P$ is topological if and only if $P$ is $\mc{R}^*$-doubly continuous.
\end{abstract}

\section{Introduction}
In the study of posets, there is a well-known convergence structures in posets, namely Scott-convergence structure, which was initially defined by D. Scott in complete lattice. This convergence structure is then generalised to the realm of general posets in \cite{zhaozhao05}. Recall that a net $\brackets{x_i}_{i\in I}$ in  a poset $P$ is said to \e{Scott-converge} (or \e{$\mc{S}$-converge}) \e{to} $x\in P$, denoted by $\brackets{x_i}_{i\in I}\xrightarrow{\mc{S}}x$, if there exists a directed subset $D$ of $P$ such that 
\begin{enumerate}
\item $x\leq \bigvee D$, and
\item for each $d\in D$, there exists $i_0\in I$ such that $i\geq i_0$ implies $d\leq x_i$.
\end{enumerate}

It is known that every convergence structure $\mc{C}$ in a given set induces a topology $\tau_{\mc{C}}$. Conversely, every topology $\tau$ generates a convergence structure $\mc{C}_{\tau}$ in the underlying set. A convergence structure $\mc{C}$ in a set $X$ is said to be \e{topological} if the convergence structure generated by the topology $\tau_{\mc{C}}$ is precisely $\mc{C}$.

In general, the net Scott-convergence structure in a poset $P$ is not topological. There may be a net and a filter which converges topologically to $x\in P$ with respect to $\tau_{\mc{S}}$ but does not $\mc{S}$-converge to $x$. Indeed, the net always exists unless the poset is continuous. Conversely, continuity of poset can guarantee that such net does not exist. The last two facts are summarised in the following theorem.

\begin{theorem}\cite{zhaozhao05}\label{th:1}
The net Scott-convergence structure in a poset $P$ is topological if and only if $P$ is a continuous poset.
\end{theorem}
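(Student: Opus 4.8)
The plan is to reduce the biconditional to a single inclusion between convergence structures and treat the two directions separately. First I would record two preliminary facts. One, the topology $\tau_{\mc S}$ induced by $\mc S$-convergence coincides with the Scott topology on $P$: its open sets are exactly the upper sets $U$ with the property that $\bigvee D \in U$ forces $D \cap U \neq \emptyset$ for every directed $D$ admitting a supremum. The upper-set property is seen by feeding a constant net at $z \geq x$ into the defining condition of $\tau_{\mc S}$-openness, and inaccessibility by directed suprema is seen by feeding the identity net on a directed set $D$ into it. Two, $\mc S$-convergence always implies $\tau_{\mc S}$-convergence, directly from the way the induced topology is defined (its open sets preserve $\mc S$-limits). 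Consequently, being topological is equivalent to the reverse implication: every net that $\tau_{\mc S}$-converges to $x$ must also $\mc S$-converge to $x$.

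For the sufficiency direction, assume $P$ is continuous. Given a net $(x_i)_{i\in I}$ that $\tau_{\mc S}$-converges to $x$, I would use that $\{y : y \ll x\}$ is directed with supremum $x$, together with the fact that for each $y \ll x$ the set $\twoheaduparrow y = \{z : y \ll z\}$ is Scott-open and contains $x$. Topological convergence then forces $(x_i)_{i\in I}$ to be eventually inside each such $\twoheaduparrow y$, hence eventually above each $y \ll x$. Taking $D = \{y : y \ll x\}$ as the witnessing directed set yields $x = \bigvee D$ together with eventual dominance of each $d \in D$, i.e. $(x_i)_{i\in I} \sconv x$.

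For the necessity direction, which is the substantive part, I would argue by exhibiting, for each $x$, a canonical net built from the neighborhood filter of $x$ in $\tau_{\mc S}$. Index it by $I = \{(U,y) : U \text{ a } \tau_{\mc S}\text{-neighborhood of } x,\ y \in U\}$, ordered by $(U,y) \leq (U',y')$ iff $U \supseteq U'$ (directed via $U_1 \cap U_2$), and set $x_{(U,y)} = y$; this net $\tau_{\mc S}$-converges to $x$. If $\mc S$-convergence is topological, the net must $\mc S$-converge to $x$, producing a directed $D$ with $x \leq \bigvee D$ and, for each $d \in D$, an index $(U_0,y_0)$ beyond which all values dominate $d$. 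The key step is to unwind this: since every pair $(U_0,y)$ with $y \in U_0$ already lies beyond $(U_0,y_0)$, we get $d \leq y$ for all $y \in U_0$, so $d$ is a lower bound of a Scott-open neighborhood of $x$; one then checks this forces $d \ll x$. Thus $D \subseteq \{d : d \ll x\}$ is directed with $\bigvee D = x$, and a short lemma shows that having such a directed set for every $x$ makes $\{d : d \ll x\}$ itself directed with supremum $x$, i.e. $P$ is continuous.

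The main obstacle I anticipate lies in the necessity direction: setting up the index set so that the net is genuinely directed while still converging topologically, and then proving the implication \emph{lower bound of a Scott-open neighborhood of $x$ $\Rightarrow$ way-below $x$}, where both clauses of Scott-openness (upper set and directed-sup inaccessibility) are used essentially. In the sufficiency direction the only delicate point is the interpolation property of $\ll$, which is what guarantees that each $\twoheaduparrow y$ is Scott-open.
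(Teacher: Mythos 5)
Your proposal is correct, but it reaches Theorem \ref{th:1} by a genuinely different route from the paper, which never proves this statement directly: it derives it as a corollary of the general equivalence for $\mc{M}$-convergence (Theorem \ref{th:Mcts_equiv}), whose two halves are Lemma \ref{lem:Mctsimplies_top} and Lemma \ref{lem:iter_implies_Mcts}, the latter resting on Kelley's characterisation (Theorem \ref{th:Kelley_char}). Your two preliminary facts are sound and are instances of Proposition \ref{prop:tm_char} with $\mc{M}=\Dir$ (for directed $A$, the existence of a finite $B\subseteq A$ with $\ub(B)\subseteq V$ is equivalent to $A\cap V\neq\varnothing$, so $\tau_{\mc{S}}$ is the Scott topology). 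Your sufficiency direction is essentially Lemma \ref{lem:Mctsimplies_top} specialised: conditions (M1) and (M2) become ``some directed $D\subseteq\dda x$ has supremum $x$'' and ``each $\dua y$ is open'', and your appeal to interpolation to get Scott-openness of $\dua y$ is precisely the paper's remark that (M2) follows from (M1) when $\mc{M}=\Dir$ --- you correctly flag this as the one delicate point. The necessity direction is where you diverge: the paper assumes only the (Iterated limits) axiom and manufactures both the witnessing $\mc{M}$-set and the openness of $\duam x$ through doubly-indexed nets built from Proposition \ref{prop:net_from_M_set} and a function-modification contradiction argument; you instead use the full strength of ``topological'' by feeding the canonical neighbourhood-filter net $x_{(U,y)}=y$, indexed by $\sets{(U,y)\mid x\in U\in\tau_{\mc{S}},\ y\in U}$, into the hypothesis. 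Your key unwinding step checks out: since $(U_0,y)\geq(U_0,y_0)$ for every $y\in U_0$, each $d$ in the witnessing directed set is a lower bound of an open neighbourhood $U_0$ of $x$, and then $d\ll x$ follows because for directed $D'$ with $x\leq\bigvee D'$ the upper-set property gives $\bigvee D'\in U_0$ and inaccessibility gives some $d'\in D'\cap U_0$, whence $d\leq d'$; the bootstrap from ``some directed $D\subseteq\dda x$ with $\bigvee D=x$ for every $x$'' to ``$\dda x$ itself is directed with supremum $x$'' is the standard short lemma and works as you sketch. Notably, your neighbourhood-net construction is the very device the paper deploys in the $\mc{MN}$ setting (the net indexed by $I_x$ in the proof of Lemma \ref{lem:iter_implies_MNcts}), just not in its $\mc{M}$-case. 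The trade-off: the paper's heavier argument proves the sharper statement that (Iterated limits) alone forces $\mc{M}$-continuity, and it survives in arbitrary minimal subset selections where interpolation may fail and (M2) must be established separately; your argument is elementary and self-contained, avoids Kelley's four axioms entirely, but is tied to $\mc{M}=\Dir$, where interpolation and the directedness bootstrap are available.
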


Another well-studied convergence structure in a poset is order-convergence (see, e.g., \cite{ole99,sunli18,wangzhao13,wolk61,zhaoli06,zhaolu17,zhaowang11,zhouzhao07}). A net $\brackets{x_i}_{i\in I}$ in  a poset $P$ is said to \e{order-converge to $x\in P$}, denoted by $\brackets{x_i}_{i\in I}\xrightarrow{\mc{O}}x$, if there exist a directed subset $D$ and a filtered subset $F$ of $P$ such that 
\begin{enumerate}
\item $x= \bigvee D=\bigwedge F$, and
\item for each $d\in D$ and $e\in F$, there exists $i_0\in I$ such that $i\geq i_0$ implies $d\leq x_i\leq e$.
\end{enumerate}
Just like net Scott-convergence structure in a poset, net order-convergence structure may not be topological. Some attempts  to find a characterisation for posets in which the order-convergence structure being topological were undergone, but no complete characterisation was obtained, until Sun Tao and Qingguo Li established a necessary and sufficient condition for the order-convergence being topological in \cite[Theorem 4.13]{sunli18}.
\begin{theorem}\label{th:2}\cite{sunli18}
The net order-convergence structure in a poset $P$ is topological if and only if $P$ is an $\mc{R}^*$-doubly continuous poset.
\end{theorem}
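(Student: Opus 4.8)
The plan is to exploit the standard fact, valid for any convergence structure $\mc{C}$ on a set, that $\mc{C}$ is always contained in the convergence $\mc{C}_{\tau_{\mc{C}}}$ generated by its induced topology: if a net $\mc{C}$-converges to $x$, then by the very definition of the open sets of $\tau_{\mc{C}}$ it is eventually inside every $\tau_{\mc{C}}$-open neighbourhood of $x$. Consequently, writing $\tau_{\mc{O}}$ for the topology induced by the order-convergence structure, the assertion that order-convergence is topological is equivalent to the single inclusion $\mc{C}_{\tau_{\mc{O}}}\sub\mc{O}$, i.e.\ every net that converges to $x$ in the topological sense with respect to $\tau_{\mc{O}}$ already order-converges to $x$. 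I would therefore split the proof into the two implications linking this inclusion to $\mc{R}^*$-double continuity.

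For the sufficiency direction (double continuity implies topological), I would assume $P$ is $\mc{R}^*$-doubly continuous and take an arbitrary net $\brackets{x_i}_{i\in I}\xrightarrow{\tau_{\mc{O}}}x$. Double continuity should supply a directed set $D$ with $\bigvee D=x$ and a filtered set $F$ with $\bigwedge F=x$, every member of which stands in the doubly-way-below relation $\mc{R}^*$ to $x$ from below, respectively from above. The crux is to verify that for each $d\in D$ the set of elements lying strictly above $d$ in this relation, and dually for each $e\in F$, contains a $\tau_{\mc{O}}$-open neighbourhood of $x$; this is exactly where the interpolation property built into $\mc{R}^*$-double continuity is used. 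Granting this, topological convergence forces $x_i$ to be eventually above each $d$ and below each $e$, so $D$ and $F$ witness $\brackets{x_i}_{i\in I}\xrightarrow{\mc{O}}x$.

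For the necessity direction (topological implies double continuity), I would fix $x\in P$ and manufacture a canonical net indexed by pairs consisting of a $\tau_{\mc{O}}$-open neighbourhood of $x$ together with a chosen point of it, directed so that the net converges to $x$ in $\tau_{\mc{O}}$ by construction. Topologicality then guarantees that this net order-converges to $x$, producing a directed $D$ and a filtered $F$ with $\bigvee D=x=\bigwedge F$. The remaining work is to read off from the defining eventual inequalities that each element of $D$ (resp.\ $F$) lies doubly-way-below (resp.\ doubly-way-above) $x$ in the sense of $\mc{R}^*$, thereby exhibiting the data required for $\mc{R}^*$-double continuity.

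The main obstacle I anticipate is the two-sidedness. Unlike the Scott case, where only the supremum side enters and the way-below relation interpolates automatically under continuity, here the directed (lower) and filtered (upper) data must be coordinated through the single relation $\mc{R}^*$. Establishing that this relation interpolates, and that the associated two-sided order-neighbourhoods are genuinely $\tau_{\mc{O}}$-open, is the technical heart on which both implications rest.
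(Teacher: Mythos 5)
Your overall architecture is sound, and it is in fact the specialisation to $\mc{M}=\Dir$, $\mc{N}=\Filt$ of the route the paper itself takes: the paper proves the general equivalence of Theorem \ref{th:MNcts_equiv} via Lemmas \ref{lem:MNctsimplies_top} and \ref{lem:iter_implies_MNcts}, and then translates $(\Dir\Filt)$-continuity into $\mc{R}^*$-double continuity through Proposition \ref{prop:Rcts_equiv_DirFiltcts}. Your automatic inclusion $\mc{O}\sub\mc{C}_{\tau_{\mc{O}}}$ is correct, your canonical neighbourhood-indexed net for necessity is exactly the net over $I_x=\sets{(V,c)}$ used in part (MN1) of Lemma \ref{lem:iter_implies_MNcts}, and your sufficiency direction is workable: from (R1) and (R2) one shows, via the order description of $\tau_{\mc{O}}$-open sets (the $\Dir\Filt$-instance of Proposition \ref{prop:tmn_char}), that each set $\dua_{\mc{O}}d\cap\ddatr_{\mc{O}}e$ with $d\ll_{\mc{O}}x\triangleleft_{\mc{O}}e$ contains a $\tau_{\mc{O}}$-open neighbourhood of $x$, which is precisely what your argument needs.

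The genuine gap is in your necessity direction. Your canonical net yields only the (R1)-type data, and even that not quite: it produces some directed $D\sub\dda_{\mc{O}}x$ and filtered $F\sub\duatr_{\mc{O}}x$ with $\bigvee D=x=\bigwedge F$, whereas (R1) asserts that $\dda_{\mc{O}}x$ and $\duatr_{\mc{O}}x$ are themselves directed, resp.\ filtered --- a short extra argument is needed (as in the proof of Proposition \ref{prop:Rcts_equiv_DirFiltcts}). More seriously, the interpolation clause (R2) is never argued: you correctly flag it as ``the technical heart,'' but no mechanism for it appears in your sketch, and it does not follow from the kind of reasoning you use elsewhere. The natural attempt fails for a concrete reason: applying Proposition \ref{prop:llmn_and_trmn}(7) to a topologically convergent net at a point of $\dua_{\mc{O}}x\cap\ddatr_{\mc{O}}z$ yields only that the net is eventually in $\uparrow x\cap\downarrow z$, which is strictly weaker than being eventually in $\dua_{\mc{O}}x\cap\ddatr_{\mc{O}}z$; so the openness of these two-sided sets cannot be ``read off'' from single nets. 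The missing idea is a diagonal argument using the (Iterated limits) axiom of Theorem \ref{th:Kelley_char}, as in part (MN2) of Lemma \ref{lem:iter_implies_MNcts}: supposing the open-set criterion fails, one picks a point $z$ of the set, a net $(x_V)$ of counterexample points converging topologically (hence, by topologicality, order-converging) to $z$, attaches to each $x_V$ a net order-converging to $x_V$ that cofinally escapes $\uparrow x\cap\downarrow z$ (possible by Proposition \ref{prop:llmn_and_trmn}(7) since $x_V\notin\dua_{\mc{O}}x\cap\ddatr_{\mc{O}}z$), and then diagonalises to obtain a net order-converging to $z$ that violates $x\ll_{\mc{O}}z\triangleleft_{\mc{O}}z'$, a contradiction. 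Without this construction (or an equivalent use of the full strength of topologicality beyond the single canonical net), the necessity half of your proof is incomplete.
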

Of course, order-convergence structure can be written in terms of filter convergence. A similar characterisation considering filter order-convergence structure is also established in \cite{sunliguo16}. We recall that a poset $P$ is \e{doubly continuous} if $P$ and $P^{\op}$ are continuous. In view of Theorem \ref{th:1}, doubly-continuity seems to be a natural characterisation for order convergence being topological. Unfortunately, that is not the case as given in the following.

\begin{theorem}\label{th:3}\cite{zhouzhao07}
Let $P$ satisfy Condition $(\ast)$. Then the net convergence structure in $P$ is topological if and only if $P$ is doubly-continuous.
\end{theorem}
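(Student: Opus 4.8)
The plan is to reduce Theorem \ref{th:3} to the characterisation already available in Theorem \ref{th:2}, turning the problem into a purely order-theoretic comparison of two notions of continuity. Since Theorem \ref{th:2} tells us that the net order-convergence structure is topological exactly when $P$ is $\mc{R}^*$-doubly continuous, and since that equivalence needs no extra hypothesis on $P$, it is enough to establish
\[
P \text{ is doubly continuous} \iff P \text{ is } \mc{R}^*\text{-doubly continuous}
\]
under the standing assumption that $P$ satisfies Condition $(\ast)$. Chaining this with Theorem \ref{th:2} then yields the theorem at once.

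The forward implication, that double continuity implies $\mc{R}^*$-double continuity, I expect to hold with no appeal to Condition $(\ast)$, and it supplies the ``if'' half of the theorem. The idea is that the relation witnessing $\mc{R}^*$-double continuity should be a weakening of the pair consisting of the way-below relation of $P$ and the way-below relation of $P^{\op}$; so once $P$ and $P^{\op}$ are both continuous, each element is simultaneously a directed supremum of elements way below it and a filtered infimum of elements way above it, and the $\mc{R}^*$-approximation is read off directly from these two approximations. This step should be a routine unwinding of the definitions, together with the interpolation property enjoyed by the way-below relation in any continuous poset.

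The substance lies in the converse: assuming $P$ is $\mc{R}^*$-doubly continuous and satisfies Condition $(\ast)$, deduce that $P$ and $P^{\op}$ are both continuous. Here Condition $(\ast)$ is precisely the structural input that rules out the pathological posets which are $\mc{R}^*$-doubly continuous yet fail to be doubly continuous, as flagged in the remark preceding the statement. Starting from an element $x$, I would take the $\mc{R}^*$-approximating directed set furnished by $\mc{R}^*$-double continuity and use Condition $(\ast)$ to refine it into a genuine directed set of elements way below $x$ whose supremum is $x$; this gives continuity of $P$. The dual argument, applied to $P^{\op}$ and using that both $\mc{R}^*$-double continuity and Condition $(\ast)$ transfer to the opposite poset, then gives continuity of $P^{\op}$, and hence double continuity of $P$.

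I expect the main obstacle to be exactly this converse refinement step. The delicacy is that $\mc{R}^*$-double continuity packages the upper and lower approximations together in a single mixed relation, and Condition $(\ast)$ must be used to \emph{decouple} them into honest way-below and way-above approximations. The danger is that refining the $\mc{R}^*$-approximating set may destroy either its directedness or the identity $x = \bigvee D$; the crux of the proof will be to verify that Condition $(\ast)$ preserves both, so that the separated approximations really do recover $x$ as a directed supremum and, dually, as a filtered infimum. Once these order-theoretic lemmas are secured, the equivalence of the two continuity notions, and therefore Theorem \ref{th:3}, follows immediately.
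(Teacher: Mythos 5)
Your route is genuinely different from the source: the paper itself gives no proof of Theorem \ref{th:3} (it is quoted from \cite{zhouzhao07} as background), and the original argument there is direct, predating Theorem \ref{th:2} by a decade, so it cannot factor through it. Your derivation is nonetheless valid and meshes well with this paper's own machinery: Theorem \ref{th:MNcts_equiv} with $\mc{M}=\Dir$, $\mc{N}=\Filt$, together with Proposition \ref{prop:Rcts_equiv_DirFiltcts}, re-proves Theorem \ref{th:2} unconditionally, and your two comparison lemmas then yield Theorem \ref{th:3} as a corollary. What you buy is a short, modular proof from the complete characterisation; what the original buys is self-containedness. Both of your lemmas are in fact true. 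Forward, with no appeal to $(\ast)$: in any poset $x\ll y$ implies $x\ll_{\mc{O}}y$ (for a test pair $(A,S)$ with $\bigvee A=y=\bigwedge S$ pick $a\in A$ with $x\leq a$ and take $B=\sets{a}$, $T$ a singleton of $S$), and dually for $\triangleleft_{\mc{O}}$; (R1) follows because for $u,v\ll_{\mc{O}}x$ one tests against the pair consisting of the way-below set of $x$ and the way-above set of $x$, noting every member of the filtered test set lies above $x$; the box condition (R2) (equivalently (MN2) via Proposition \ref{prop:tmn_char}) is where interpolation enters: from $a\ll x$ interpolate $a\ll a'\ll x$ and dually $x\triangleleft s'\triangleleft s$, and then every $w$ with $a'\leq w\leq s'$ satisfies $y\leq a\ll w$, hence $y\ll_{\mc{O}}w$, and dually. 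The only nontrivial ingredient is that interpolation holds in continuous \emph{posets}, not merely dcpos; this is standard (cf.\ \cite{zhaozhao05}) but you should flag it, since it is exactly what lets this half avoid $(\ast)$ --- consistent with \cite[Example 3.6]{sunli18}, which shows the implication is strict.

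One framing in your converse step is slightly off, though harmlessly so: no ``refinement'' of the approximating set is needed, so your worry about destroying directedness or the identity $x=\bigvee D$ dissolves. The correct mechanism is the \emph{identity of relations} $\ll_{\mc{O}}=\ll$ and $\triangleleft_{\mc{O}}=\ll^{\op}$ under Condition $(\ast)$ (indeed under the strictly weaker Condition $(\Delta)$ of \cite{wangzhao13}). Concretely: if $x\ll_{\mc{O}}y$ and $D$ is directed with $y\leq\bigvee D$, meet-continuity replaces $D$ by $\sets{y\wedge d\mid d\in D}$, which is directed with supremum exactly $y$; paired with the singleton filtered set $\sets{y}$ this is a legitimate test pair for $\ll_{\mc{O}}$, and the resulting finite subset $B$ together with directedness produces $d^*\in D$ with $x\leq y\wedge d^*\leq d^*$, whence $x\ll y$. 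With these identities, (R1) \emph{literally asserts} that $\dda_{\mc{O}}x$ is the way-below set of $x$, directed with supremum $x$, and dually, i.e., that $P$ and $P^{\op}$ are continuous; only (R1) is used, the directedness and the suprema/infima come for free, and the dual half needs no separate passage to $P^{\op}$ beyond the dual half of $(\ast)$. With these two mechanisms made explicit, your plan is a complete and correct proof.
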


Consider the following definition: A poset $P$ is \e{meet-continuous} if for each $D\subseteq P$ and $x\in D$, $x\leq\bigvee D$ implies $\bigvee\sets{x\wedge d\mid d\in D}$ exists and equals $x$. Then one can simply say that a poset $P$ satisfies Condition $(\ast)$ if and only if $P$ is \e{doubly-meet-continuous}, i.e., both $P$ and $P^{\op}$ are meet-continuous. In \cite[Example 3.6]{sunli18}, an example of an $\mc{R}^*$-doubly continuous poset which fails to be doubly continuous poset is presented. This means that there is a poset not satisfying Condition $(\ast)$ in which the order-convergence structure is topological. Ones may be polarised when seeing Theorem \ref{th:2} and Theorem \ref{th:3}. Some may say that the characterisation given in Theorem \ref{th:3} is not complete since it only holds in a certain class of posets, but it is good because of the naturalness of doubly-continuity. Some others may say that the characterisation given in Theorem \ref{th:2} is complete, but the definition of $\mc{R}^*$-doubly continuity is not really natural. In this paper, we put our focus more on complete characterisations, following Theorem \ref{th:2}.

In the development of domain theory, the idea of generalising existing classical results is of interest. One branch of generalisation is via subset selections (see, e.g., \cite{bar96,ven86,zhangli17,zhao92,zhao15}). Recall that a \e{subset selection} is an assignment $\mc{M}$ assigning every poset $P$ to a collection $\mc{M}(P)$ of subsets of $P$. In \cite{zhouzhao07}, the notion of lim-inf$_{\mc{M}}$-convergence (which we will call $\mc{M}$-convergence in this paper) is studied. Scott-convergence is a special case of this convergence. As expected, this convergence is not always topological. A characterisation of poset in which the lim-inf$_{\mc{M}}$-convergence structure is topological (given $\mc{M}$ contains all singletons as members). This characterisation is considered incomplete in the sense of \cite{sunli18}, i.e., the poset needs to satisfy some certain condition at the first place. In other words, the characterisation only holds in some certain class of posets (see \cite[Theorem 3.1]{zhouzhao07}). In this paper, we provide a complete characterisation by introducing the notion of $\mc{M}$-continuous space. This result will have Theorem \ref{th:1} as a corollary. We also define the notion of $\mc{MN}$-convergence to generalise order-convergence in the same spirit as lim-inf$_{\mc{M}}$-convergence generalises Scott-convergence. We will then provide a characterisation of a poset in which $\mc{MN}$-convergence structure being topological. From this condition, we then can deduce Theorem \ref{th:2} given above.

\section{Preliminaries}

We will only provide some denotations and convergence-related definitions and result in this section. For any other standard definition and notation of topology and domain theory, we refer to \cite{redbook}, \cite{gl}, and \cite{kel}. 

Let $P$ be a poset and $Q\sub P$. We denote the set of all upper bounds, resp. lower bounds, of $Q$ by $\ub(Q)$, resp $\lb(Q)$. Let $X$ be a set and $\brackets{x_i}_{i\in I}$ be a net in $X$. We say that $x_i$ \e{satisfies condition $\alpha$ eventually} if there exists $i_0\in I$ such that for all $i\in I$, $i\geq i_0$ implies $x_i$ satisfies condition $\alpha$. An element $x$ is an \e{eventual lower bound of $\brackets{x_i}_{i\in I}$} if $x_i\geq x$ eventually. We denote the set of all eventual lower bounds of $\brackets{x_i}_{i\in I}$ by $\elbb{\brackets{x_i}_{i\in I}}$. A \emph{net convergence structure} in $X$ is a class $\mathcal{C}$ of tuples $\brackets{(x_i)_{i\in I},x}$ where $(x_i)_{i\in I}$ is a net whose terms are elements of $X$ and $x\in X$. A topology induced by a net convergence structure $\mathcal{C}$ in $X$ is the collection $\tau_{\mathcal{C}}$ of all subsets $U$ of $X$  satisfying
$$((x_i)_{i\in},x)\in\mathcal{C} \text{ and }x\in U \Longrightarrow x_i\in U\text{ eventually}.$$

Given a topology $\tau$ in $X$, a net $\brackets{x_i}_{i\in I}$ in $X$ is said to \e{converge to $x\in X$ topologically with respect to }, denoted by $(x_i)_{i\in I}\xrightarrow{\tau}x$ or $x_i\xrightarrow{\tau}x$, if for every $U\in tau$ such that $x\in U$, $x_i\in U$ eventually. A net convergence structure $\mathcal{C}$ in $X$ is said to be \emph{topological} if for every net $(x_i)_{i\in I}$ in $X$ which converges topologically to $x$ with respect to $\tau_{\mathcal{C}}$, it holds that $((x_i)_{i\in I},x)\in\mathcal{C}$. The following theorem provides a necessary and sufficient condition for a net convergence structure to be topological.

\begin{theorem}\cite{kel}\label{th:Kelley_char}
A net convergence structure $\mc{C}$ in a set $X$ is topological if and only if it satisfies the following four axioms:
\begin{enumerate}[\hspace{-.5mm}(1)]
\item (Constants).
If $\brackets{x_i}_{i\in I}$ is a constant net with $x_i=x$ for all $i\in I$, then $\brackets{\brackets{x_i}_{i\in I},x}\in \mc{C}$.

\item (Subnets).
If $\brackets{\brackets{x_i}_{i\in I},x} \in \mc{C}$ and $\brackets{y_j}_{j\in J}$ is a subnet of $\brackets{x_i}_{i\in I}$, then $\brackets{\brackets{y_j}_{j\in J},x}\in\mc{C}$.

\item (Divergence).
If $\brackets{\brackets{x_i}_{i\in I},x} \notin \mc{C}$, then there exists a subnet $\brackets{y_j}_{j\in J}$ of $\brackets{x_i}_{i\in I}$ such that for any subnet $\brackets{z_k}_{k\in K}$ of $\brackets{y_j}_{j\in J}$,
$\brackets{\brackets{z_k}_{k\in K},x} \notin \mc{C}$.

\item (Iterated limits).
If $\brackets{\brackets{x_i}_{i\in I},x} \in \mc{C}$ and
$\brackets{\brackets{x_{i,j}}_{j\in J(i)},x_i}\in\mc{C}$ for all $i\in I$, then we have $\brackets{\brackets{x_{(i,f)}}_{(i,f)\in K},x}\in\mc{C}$, where the partial order on
$K :=I\times\prod \himp{J(i)\mid i\in I}$ is the pointwise order.
\end{enumerate}
\end{theorem}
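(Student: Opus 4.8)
The plan is to establish the two implications separately, isolating the genuine difficulty in the sufficiency direction. For \emph{necessity}, suppose $\mathcal{C}$ is topological, so that $((x_i)_{i\in I},x)\in\mathcal{C}$ holds exactly when $x_i\xrightarrow{\tau_{\mathcal{C}}}x$. Each axiom then reduces to a familiar property of convergence in the topology $\tau_{\mathcal{C}}$. Axioms (1) and (2) are immediate: a constant net is always inside every neighbourhood of its value, and a subnet of a net that is eventually in an open set $U$ is again eventually in $U$. For (3), if $x_i\not\xrightarrow{\tau_{\mathcal{C}}}x$ there is an open $U\ni x$ with $x_i\notin U$ for cofinally many $i$, and the corresponding cofinal subnet has no subnet converging to $x$. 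Only (4) needs a short computation: given an open $U\ni x$, convergence of the outer net supplies $i_0$ with $x_i\in U$ for $i\ge i_0$, convergence of each inner net supplies for such $i$ an index past which $x_{i,j}\in U$, and choosing $f_0$ to record these indices shows the diagonal net on $K$ is eventually in $U$.

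For \emph{sufficiency}, assume the four axioms. The inclusion $\mathcal{C}\subseteq\mathcal{C}_{\tau_{\mathcal{C}}}$ is automatic from the definition of $\tau_{\mathcal{C}}$: if $((x_i)_{i\in I},x)\in\mathcal{C}$ and $x\in U\in\tau_{\mathcal{C}}$ then $x_i\in U$ eventually, so $x_i\xrightarrow{\tau_{\mathcal{C}}}x$. The whole content is therefore the reverse inclusion, namely that $\tau_{\mathcal{C}}$-convergence forces $\mathcal{C}$-convergence. My approach is to realise $\tau_{\mathcal{C}}$ through a closure operator built from $\mathcal{C}$: set $\cl(A):=\{x\in X\mid\text{some net lying in }A\text{ is }\mathcal{C}\text{-convergent to }x\}$. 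I would verify the Kuratowski axioms, obtaining $\cl(\emptyset)=\emptyset$ trivially, $A\subseteq\cl(A)$ from axiom (1) applied to constant nets, and finite additivity $\cl(A\cup B)=\cl(A)\cup\cl(B)$ from axiom (2), since a net in $A\cup B$ admits a subnet lying wholly in $A$ or wholly in $B$. A further appeal to (2) shows the topology of this closure operator coincides with $\tau_{\mathcal{C}}$, because ``frequently in $A$'' can always be thinned to ``a net lying in $A$''.

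The main obstacle is idempotency $\cl(\cl(A))=\cl(A)$, which is precisely where axiom (4) is indispensable. If $x\in\cl(\cl(A))$, choose a net $(x_i)_{i\in I}$ in $\cl(A)$ with $((x_i)_{i\in I},x)\in\mathcal{C}$, and for each $i$ a net $(x_{i,j})_{j\in J(i)}$ in $A$ with $((x_{i,j})_{j\in J(i)},x_i)\in\mathcal{C}$; the iterated-limits axiom then yields a single net on $K=I\times\prod\{J(i)\mid i\in I\}$ that lies in $A$ and $\mathcal{C}$-converges to $x$, so $x\in\cl(A)$.

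With the closure machinery in place, I would finish by showing $\tau_{\mathcal{C}}$-convergence implies $\mathcal{C}$-convergence, and here the Divergence axiom enters. Suppose $x_i\xrightarrow{\tau_{\mathcal{C}}}x$ but $((x_i)_{i\in I},x)\notin\mathcal{C}$. By (3) there is a subnet $(y_j)_{j\in J}$ no subnet of which $\mathcal{C}$-converges to $x$; being a subnet it still satisfies $y_j\xrightarrow{\tau_{\mathcal{C}}}x$, so every tail $A_j=\{y_{j'}\mid j'\ge j\}$ has $x$ in its closure and hence carries a net $(z^j_k)_k$ that is $\mathcal{C}$-convergent to $x$. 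Taking the constant net $x$ over $J$ as outer net (axiom (1)) with these as inner nets, iterated limits (4) produces a diagonal net that is $\mathcal{C}$-convergent to $x$ and, because each $z^j_k$ lies in the tail $A_j$, qualifies as a subnet of $(y_j)_{j\in J}$, contradicting the choice of $(y_j)$. This yields $\mathcal{C}_{\tau_{\mathcal{C}}}\subseteq\mathcal{C}$ and completes the argument. The two points I expect to demand the most care are the idempotency step and checking that this diagonal net genuinely meets the definition of a subnet.
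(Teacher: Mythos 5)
Your proposal is correct and is essentially the proof the paper defers to: the statement is quoted from Kelley's \emph{General Topology} \cite{kel}, and your argument --- building the closure operator $\cl(A)$ from $\mathcal{C}$, verifying the Kuratowski axioms with idempotency supplied by (Iterated limits), identifying the resulting topology with $\tau_{\mathcal{C}}$ via (Subnets), and then combining (Divergence), (Constants) and (Iterated limits) in the diagonal construction to force $\mathcal{C}_{\tau_{\mathcal{C}}}\subseteq\mathcal{C}$ --- is exactly Kelley's classical convergence-class proof. The one point you flagged does check out: with Kelley's definition of subnet (a map $h$ into the index set that is eventually beyond every index, with no monotonicity required), the assignment $(j,f)\mapsto j'$ where $z^j_{f(j)}=y_{j'}$ and $j'\geq j$ witnesses the diagonal net as a genuine subnet of $(y_j)_{j\in J}$, so there is no gap.
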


Henceforth convergence structure always refers to net convergence structure. 

\section{Minimal Subset Selections and $\mc{M}$-convergence Structures in Posets}

We begin this section by recalling definition of subset selections.  A \e{subset selection} $\mc{M}$ assigns to each poset $P$ a certain collection $\mc{M}(P)$ of subsets $P$ (see, e.g, \cite{zhao92,zhao15}). In this paper, our concern is on certain subset selections which we call minimal subset selections.

\begin{definition}
For every poset $P$, we define $\mc{M}(P)$ to be a collection of subsets of $P$ such that $\varnothing\notin\mc{M}(X)$ and $\sets{x}\in\mc{M}(P)$ for every $x\in P$. We call the assignment $\mc{M}$ \e{minimal subset selection}. If $A\in\mc{M}(P)$, then we call it an \e{$\mc{M}$-set in $P$}. We denote the collection of all $\mc{M}$-sets having supremum, resp. infimum, by $\mc{M}^+(P)$, resp. $\mc{M}^-(P)$.
\end{definition}

There are many minimal subset selections can be defined. The following are some examples:
\begin{enumerate}
\item $\Dir(P):=$ the collection of all directed subsets of $P$,
\item $\Filt(P):=$ the collection of all filtered subsets of $P$,
\item ${\rm fin}(P):=$ the collection of all nonempty finite subsets of $P$,
\item ${\rm Ch}(P):=$ the collection of all nonempty chains in $P$, and
\item ${\rm ACh}(P):=$ the collection of all nonempty anti-chains in $P$.
\end{enumerate}
It is known that any $T_0$ space $X$ induces a poset, called \e{specialisation poset}. Hence one can define minimal subset selections by making use of some topological property. The following are some of them:
\begin{enumerate}
\item $\Irr(X):=$ the collection of all irreducible sets in $X$,
\item ${\rm Cpt}(X):=$ the collection of all nonempty compact sets in $X$, and
\item ${\rm Con}(P):=$ the collection of all connected sets if $P$.
\end{enumerate}

Throughout this section we assume that a certain minimal subset selection $\mc{M}$ is already given.

\begin{definition} 
Let $P$ be a poset. We say that a net $(x_i)_{i\in I}$ \emph{$\mc{M}$-converges to $x\in P$}, denoted by $(x_i)_{i\in I}\mconv x$ or $x_i\mconv x$, if there exists $A\in\mc{M}^+(P)$ such that $x\leq\bigvee A$ and $A\subseteq\elbb{(x_i)_{i\in I}}$.
\end{definition}

\begin{remark}\label{rem:Mconv_to_down}
Let $P$ be a poset.
\begin{enumerate}
\item[{\rm (1)}] The $\mc{M}$-convergence structure in $P$ satisfies (Constants) and (Subnets) axioms.
\item[{\rm (2)}] If $x_i\mconv x$ implies $x_i\mconv y$ for every $y\in \downarrow x$.
\item[{\rm (3)}] If $\mc{M}=\Dir$ then the $\mc{M}$-convergence structure is exactly the Scott-convergence structure.
\end{enumerate}
\end{remark}

The following proposition provides a way to construct a net out of an $\mc{M}$-set with some special properties.

\begin{proposition}\label{prop:net_from_M_set}
For each $A\in\mc{M}^+(P)$, define $$I_A=\sets{(u,\ub(B))\mid B\sub_{{\rm fin}}A, B\neq\varnothing,\text{ and }u\in\ub(B)}$$ and equip it with the following order: $$(u_1,\ub(B_1))\leq (u_2,\ub(B_2))\t{ if and only if }\ub(B_1)\supseteq \ub(B_2).$$ For each $(u,\ub(B))\in I_A$, define $x_{(u,\ub(B))}=u$ to form a net $(x_i)_{i\in I_A}$. Then $x_i\mconv \bigvee A$.
\end{proposition}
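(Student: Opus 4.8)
The plan is to establish two things: first, that $\brackets{I_A,\leq}$ is genuinely a directed set, so that $(x_i)_{i\in I_A}$ is a bona fide net; and second, that this net $\mc{M}$-converges to $\bigvee A$. For the convergence part, the natural witness is $A$ itself: since $A\in\mc{M}^+(P)$ we know $\bigvee A$ exists and trivially $\bigvee A\leq\bigvee A$, so by the very definition of $\mc{M}$-convergence it will suffice to verify that $A\sub\elbb{(x_i)_{i\in I_A}}$, i.e., that every member of $A$ is an eventual lower bound of the net.

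For directedness, I would first note that $I_A\neq\varnothing$: since $A\in\mc{M}(P)$ and $\varnothing\notin\mc{M}(P)$ by the definition of a minimal subset selection, $A$ contains some element $a$, and then $\brackets{a,\ub(\sets{a})}\in I_A$ because $a\in\ub(\sets{a})$. The relation $\leq$ is reflexive and transitive, as it is defined purely by reverse inclusion of the sets $\ub(B)$. To see it is directed, given $\brackets{u_1,\ub(B_1)}$ and $\brackets{u_2,\ub(B_2)}$, I would set $B_3=B_1\cup B_2$, a nonempty finite subset of $A$. The crucial point -- and the only place the hypothesis $A\in\mc{M}^+(P)$ is really used -- is that $\ub(B_3)\neq\varnothing$: because $\bigvee A$ is an upper bound of all of $A$, it lies in $\ub(B_3)$. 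Hence $\brackets{\bigvee A,\ub(B_3)}\in I_A$, and since $B_1,B_2\sub B_3$ yields $\ub(B_3)\sub\ub(B_1)\cap\ub(B_2)$, this element dominates both given indices.

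It then remains to show each $a\in A$ is an eventual lower bound. Here I would take the index $i_0=\brackets{a,\ub(\sets{a})}\in I_A$. For any $i=\brackets{u,\ub(B)}\geq i_0$, the order gives $\ub(\sets{a})\supseteq\ub(B)$; since $u\in\ub(B)$ by construction of $I_A$, we get $u\in\ub(\sets{a})$, that is $u\geq a$, so $x_i=u\geq a$. Thus $a\leq x_i$ eventually, which proves $A\sub\elbb{(x_i)_{i\in I_A}}$ and hence $x_i\mconv\bigvee A$. The argument is short overall; the one step deserving real care is the directedness check, where I must remember that antisymmetry is not needed (the relation is only a directed preorder, several indices sharing the same $\ub(B)$) and that the existence of $\bigvee A$ is precisely what keeps the relevant upper-bound sets nonempty.
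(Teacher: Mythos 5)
Your proof is correct and takes essentially the same route as the paper's: it uses $A$ itself as the witnessing $\mc{M}$-set and shows each $a\in A$ is an eventual lower bound from the index $\brackets{a,\ub(\sets{a})}$. The only difference is that you spell out the directedness of $I_A$ (via $B_1\cup B_2$, with $\bigvee A$ guaranteeing the upper-bound set is nonempty), a detail the paper merely asserts.
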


It is known that Scott convergence structures induces Scott topologies and this topology has an order description in its definition. Similarly, we also have an order description for the topology induced by $\mc{M}$-convergence structure.

\begin{proposition}\label{prop:tm_char}
Let $P$ be a poset and $\tau_{\mc{M}}$ the topology induced by the $\mc{M}$-convergence structure in $P$. Then $V\in \tau_{\mc{M}}$ if and only if the following two conditions hold:
\begin{enumerate}[{\rm (TM1)}]
\item $V$ is an upper set;
\item for each $A\in\mc{M}^+(P)$, if $\bigvee A\in V$ then there exists a nonempty finite subset $B$ of $A$ such that $\ub(B)\subseteq V$.
\end{enumerate}
\end{proposition}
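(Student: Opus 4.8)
The plan is to read off both directions directly from the definition of the induced topology: recall that $V\in\tau_{\mc{M}}$ means precisely that whenever $(x_i)_{i\in I}\mconv x$ and $x\in V$, the net satisfies $x_i\in V$ eventually. So I would fix such a candidate $V$ and prove the equivalence by unwinding this condition against the $\mc{M}$-convergence witnesses, using Remark \ref{rem:Mconv_to_down} and the canonical net of Proposition \ref{prop:net_from_M_set} as the two main tools.

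For the forward direction, assume $V\in\tau_{\mc{M}}$. To obtain (TM1), I would take $x\in V$ and $y\geq x$ and test $V$ against the constant net at $y$: since singletons are $\mc{M}$-sets with $\bigvee\sets{y}=y$, this net $\mc{M}$-converges to $y$, and hence by Remark \ref{rem:Mconv_to_down}(2) also to $x\in\downarrow y$. As $x\in V$, openness of $V$ forces the (constant) net into $V$, giving $y\in V$, so $V$ is an upper set. To obtain (TM2), given $A\in\mc{M}^+(P)$ with $\bigvee A\in V$, I would feed in the net $(x_i)_{i\in I_A}$ of Proposition \ref{prop:net_from_M_set}, which $\mc{M}$-converges to $\bigvee A$. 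Openness of $V$ then yields an index $i_0=(u_0,\ub(B_0))$ past which the net lies in $V$, and $B=B_0$ is the required finite witness: for any $w\in\ub(B_0)$ the index $(w,\ub(B_0))$ dominates $i_0$ because the order on $I_A$ depends only on the second coordinate, whence $w=x_{(w,\ub(B_0))}\in V$ and $\ub(B_0)\subseteq V$.

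For the converse, assume (TM1) and (TM2) and suppose $(x_i)_{i\in I}\mconv x$ with $x\in V$. By definition there is $A\in\mc{M}^+(P)$ with $x\leq\bigvee A$ and $A\subseteq\elbb{(x_i)_{i\in I}}$. Since $V$ is an upper set and $x\leq\bigvee A$, we get $\bigvee A\in V$, so (TM2) supplies a nonempty finite $B\subseteq A$ with $\ub(B)\subseteq V$. Each of the finitely many members of $B$ is an eventual lower bound of the net, and directedness of $I$ lets me choose a single index beyond which $x_i$ dominates all of $B$ simultaneously; from that index on, $x_i\in\ub(B)\subseteq V$, so the net is eventually in $V$ and therefore $V\in\tau_{\mc{M}}$.

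I expect the main obstacle to be the (TM2) half of the forward implication: the key realisation is that the index set $I_A$ in Proposition \ref{prop:net_from_M_set} is engineered so that a \emph{single} tail determined by an index $(u_0,\ub(B_0))$ already captures the entire upper-bound set $\ub(B_0)$, which is exactly what converts "eventually in $V$'' into the finitary condition $\ub(B)\subseteq V$. Everything else is routine, the only point requiring care being the finiteness-plus-directedness step in the converse that merges the finitely many eventual-lower-bound indices into one.
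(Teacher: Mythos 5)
Your proposal is correct and follows essentially the same route as the paper's proof: the constant-net argument (via Remark \ref{rem:Mconv_to_down}) for (TM1), the canonical net $(x_i)_{i\in I_A}$ of Proposition \ref{prop:net_from_M_set} with the observation that the order on $I_A$ depends only on the second coordinate for (TM2), and the finiteness-plus-directedness merge of eventual lower bounds for the converse. No gaps; your write-up is, if anything, slightly more explicit than the paper's at the final step.
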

\begin{proof}
Let $V\in \tau_{\mc{M}}$. We will prove (TM1) and (TM2) hold.
\begin{enumerate}[{\rm (TM1)}]
\item Let $x\in \uparrow V$. There exists $v\in V$ such that $v\leq x$. By Remark \ref{rem:Mconv_to_down}, we have that the constant net $(x)$ $\mc{M}$-converges to $v$. Hence $x\in V$.
\item Let $A\in\mc{M}^+(P)$ such that $\bigvee A\in V$. We consider the net $(x_i)_{i\in I_A}$ as in Proposition \ref{prop:net_from_M_set}. Since $V\in \tau_{\mc{M}}$, there exists $i_0\in I_A$ such that $j\geq i_0$ implies $x_i\in V$. Let $i_0=(u,\ub(B))$, where $B$ is a nonmepty finite subset of $A$. Then for each $t\in\ub(B)$ it holds that $j:=(t,\ub(B))\geq i_0$, we have that $t\in \ub(B)$. Therefore $\ub(B)\subseteq V$.
\end{enumerate}
Conversely, let $V$ satisfy (TM1) and (TM2), $(x_i)_{i\in I}$ $\mc{M}$-converge to $x$, and $x\in V$. Then there exists $A\in\mc{M}^+(X)$ such that $x\leq \bigvee A$ and $A\subseteq\elb((x_i)_{i\in I})$. By (TM1), $\bigvee A\in V$. By (TM2), there exists a nonempty finite subset $B$ of $A$ such that $\ub(B)\sub V$. Since $B$ is finite and $B\subseteq\elb((x_i)_i\in I)$, we have that $x_i\in\ub(B)$ eventually. Therefore $V\in\tau_{\mc{M}}$.
\end{proof}

We define the \emph{$\mc{M}$-way-below-relation} $\ll_{\mc{M}}$ on a poset $P$ as follows: $x\ll_{\mc{M}} y$ if and only if for every $A\in\mc{M}^+(P)$, if $y\leq\bigvee A$ then there exists a nonempty finite subset $B$ of $A$ such that $\ub(B)\sub\uparrow x$. We then have the following proposition.

\begin{proposition}\label{prop:llm_is_aux}
Let $P$ be a poset and $u,x,y,z\in P$.
\begin{enumerate}[{\rm(1)}]
\item If $x\ll_{\mc{M}}y$ then $x\leq y$.
\item If $u\leq x\ll_{\mc{M}}y\leq z$ then $u\ll_{\mc{M}} z$.
\item If $P$ has a bottom element $\bot$, then $\bot\ll_{\mc{M}}x$ for every $x\in P$.
\item $x\ll_{\mc{M}} y$ if and only if for every net $(x_i)_{i\in I}$, $x_i\mconv  y$ implies $x_i\in\uparrow x$ eventually.
\end{enumerate}
\end{proposition}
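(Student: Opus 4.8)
The plan is to dispatch the first three items by unwinding the definition of $\ll_{\mc{M}}$ against a suitably chosen $\mc{M}$-set, and to reserve the real work for the equivalence in item (4).

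For (1), I would instantiate the defining condition at the singleton $A=\sets{y}$, which lies in $\mc{M}^+(P)$ because every minimal subset selection contains all singletons and $\bigvee\sets{y}=y$. Since $y\leq\bigvee\sets{y}$, there must be a nonempty finite $B\sub\sets{y}$ with $\ub(B)\sub\uparrow x$; but necessarily $B=\sets{y}$, so $\uparrow y\sub\uparrow x$, and $y\in\uparrow y$ forces $x\leq y$. For (2), given $A\in\mc{M}^+(P)$ with $z\leq\bigvee A$, I would note $y\leq z\leq\bigvee A$, apply $x\ll_{\mc{M}}y$ to obtain a finite $B\sub A$ with $\ub(B)\sub\uparrow x$, and finish using $u\leq x$, which gives $\uparrow x\sub\uparrow u$ and hence $\ub(B)\sub\uparrow u$. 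For (3), since $\varnothing\notin\mc{M}(P)$ every $A\in\mc{M}^+(P)$ is nonempty, so picking any $a\in A$ and $B=\sets{a}$ yields $\ub(B)\sub P=\uparrow\bot$, which is exactly what the definition demands.

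The heart of the proposition is the equivalence (4). For the forward implication, assume $x\ll_{\mc{M}}y$ and let $(x_i)_{i\in I}\mconv y$. By definition of $\mc{M}$-convergence there is $A\in\mc{M}^+(P)$ with $y\leq\bigvee A$ and $A\sub\elbb{(x_i)_{i\in I}}$; applying $x\ll_{\mc{M}}y$ produces a nonempty finite $B\sub A$ with $\ub(B)\sub\uparrow x$. Because $B$ is finite and each of its finitely many elements is an eventual lower bound of the net, I can take a common index beyond which $x_i$ dominates every element of $B$ simultaneously, i.e.\ $x_i\in\ub(B)\sub\uparrow x$ eventually.

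The converse is where the one genuine idea is needed, and I expect it to be the main obstacle: recovering a purely order-theoretic witness $B$ from a statement quantified over arbitrary nets. The plan is to feed the hypothesis the canonical net attached to $A$. Given $A\in\mc{M}^+(P)$ with $y\leq\bigvee A$, Proposition \ref{prop:net_from_M_set} supplies the net $(x_i)_{i\in I_A}$ with $x_i\mconv\bigvee A$, and Remark \ref{rem:Mconv_to_down}(2) then gives $x_i\mconv y$ since $y\in\downarrow\bigvee A$. The hypothesis now forces $x_i\in\uparrow x$ eventually, witnessed by some index $i_0=(u_0,\ub(B_0))$ with $B_0$ a nonempty finite subset of $A$. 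The final step is to check that this $B_0$ is the desired witness: for any $t\in\ub(B_0)$ the pair $(t,\ub(B_0))$ is a legitimate member of $I_A$ lying above $i_0$, because the order on $I_A$ compares only the upper-bound component and $\ub(B_0)\supseteq\ub(B_0)$; hence $x_{(t,\ub(B_0))}=t\in\uparrow x$, giving $\ub(B_0)\sub\uparrow x$ and therefore $x\ll_{\mc{M}}y$.
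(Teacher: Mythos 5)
Your proof is correct and follows essentially the same route as the paper's: items (1)--(3) by instantiating the definition of $\ll_{\mc{M}}$ at singletons and at the witnessing finite subset, the forward direction of (4) via finiteness of $B$ together with eventual lower bounds, and the converse via the canonical net $(x_i)_{i\in I_A}$ of Proposition~\ref{prop:net_from_M_set}. Your version is in fact slightly more careful than the paper's at two points it leaves implicit: you invoke Remark~\ref{rem:Mconv_to_down}(2) to pass from convergence to $\bigvee A$ down to convergence to $y$, and you explicitly verify that every $t\in\ub(B_0)$ yields an index $(t,\ub(B_0))\geq i_0$, extracting the witness $B_0$.
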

\begin{proof}
\begin{enumerate}
\item It is immediate since $\sets{y}\in\mc{M}^+(P)$.
\item Let $A\in\mc{M}^+(P)$ such that $z\leq \bigvee A$. Then $y\leq\bigvee A$, implying the existence of a nonempty finite subset $B$ of $A$ such that $\ub(B)\subseteq \uparrow x\subseteq \uparrow u$.
\item It is clear from the definition of $\ll_{\mc{M}}$.
\item Let $x\ll_{\mc{M}}y$ and $(x_i)_{i\in I}$ be a net $\mc{M}$-converging to $y$. There exists $A\in\mc{M}^+(P)$ such that $y\leq \bigvee A$ and $A\subseteq\elb((x_i)_{i\in I})$. By assumption, there exists a nonempty finite subset $B$ of $A$ such that $\ub(B)\subseteq \uparrow x$. Since $B$ is finite and $B\subseteq\elb((x_i)_{i\in I})$, we have that $x_i\in\ub(B)$ eventually, hence $x_i\in\uparrow x$ eventually. Conversely, assume for every net $(x_i)_{i\in I}$, $x_i\mconv  y$ implies $x_i\in\uparrow x$ eventually and let $A\in\mc{M}^+(P)$ such that $y\leq \bigvee A$. We consider the net $(x_i)_{i\in I_A}$ as given in Proposition \ref{prop:net_from_M_set}. By assumption, there exists $i_0\in I_A$ such that $j\geq i$ implies $x_i\in \uparrow x$. By the construction of $(x_i)_{i\in I_A}$, this implies there exists a nonempty finite subset $B$ of $A$ such that $\ub(B)\subseteq \uparrow x$. We conclude that $x\ll_{\mc{M}}y$.
\end{enumerate}
\end{proof}

Now we are ready to introduce the notion of $\mc{M}$-continuity, which, we will prove later, is a characterisation for $\mc{M}$-convergence structure being topological.

\begin{definition}\label{def:Mcts}
A poset $P$ is said to be \emph{$\mc{M}$-continuous} if for each $x\in P$, the following hold:
\begin{enumerate}[{\rm (M1)}]
\item there exists $A\in\mc{M}^+(X)$ such that $A\subseteq \ddam x:=\sets{y\in P\mid y\llm x}$ and $x\leq\bigvee A$;
\item $\duam x:=\sets{y\in P\mid x\llm y}\in\tau_{\mc{M}}$.
\end{enumerate}
\end{definition}

Note that, in virtue of Remark \ref{rem:Mconv_to_down} and Proposition \ref{prop:tm_char}, to prove (M2), it suffices to show that $\duam x$ satisfies (TM2).

\begin{remark}
Let $P$ be a poset.
\begin{enumerate}
\item[{\rm (1)}] If $\mc{M}(P)$ is the collection of all directed subsets of $P$ then $\llm$ is just the usual way-below relation on $P$ and $P$ is $\mc{M}$-continuous if and only if $P$ is a continuous poset. In particular, (M2) can be implied from (M1).
\item[{\rm (2)}] If $P$ is $\alpha(\mc{M})$-continuous in the sense of \cite[Definition 3.4]{zhouzhao07}, then for every $x\in P$, (M1) holds.
\end{enumerate}
\end{remark}

\begin{lemma}\label{lem:Mctsimplies_top}
If $P$ is an $\mc{M}$-continuous poset, then the $\mc{M}$-convergence structure in $P$ is topological.
\end{lemma}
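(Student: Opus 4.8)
The plan is to work directly from the definition of a topological convergence structure given in Section~2, rather than verifying the four Kelley axioms of Theorem~\ref{th:Kelley_char}. By that definition, it suffices to show that whenever a net $(x_i)_{i\in I}$ converges to $x$ topologically with respect to $\tau_{\mc{M}}$, it already holds that $x_i \mconv x$. This bypasses the delicate (Iterated limits) axiom entirely and reduces the whole lemma to a single implication.

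So, first I would fix a net with $x_i \xrightarrow{\tau_{\mc{M}}} x$ and search for an $\mc{M}$-set witnessing $x_i \mconv x$. The natural candidate is supplied by $\mc{M}$-continuity: condition (M1) of Definition~\ref{def:Mcts} yields $A \in \mc{M}^+(P)$ with $A \subseteq \ddam x$ and $x \le \bigvee A$. Of the two requirements in the definition of $\mc{M}$-convergence, the condition $x \le \bigvee A$ is already in hand, so the entire task collapses to proving $A \subseteq \elbb{(x_i)_{i\in I}}$, i.e.\ that every element of $A$ is an eventual lower bound of the net.

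The crux is this last inclusion, and here condition (M2) does the work. Fix $a \in A$. Since $a \llm x$ we have $x \in \duam a$, and (M2) guarantees $\duam a \in \tau_{\mc{M}}$. Thus $\duam a$ is an open neighbourhood of $x$, and as $x_i \to x$ topologically we obtain $x_i \in \duam a$ eventually, i.e.\ $a \llm x_i$ eventually; Proposition~\ref{prop:llm_is_aux}(1) then upgrades this to $a \le x_i$ eventually, so $a \in \elbb{(x_i)_{i\in I}}$. Letting $a$ range over $A$ gives $A \subseteq \elbb{(x_i)_{i\in I}}$, and combined with $x \le \bigvee A$ this is exactly $x_i \mconv x$.

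I do not expect a serious obstacle: conditions (M1) and (M2) are each used exactly once and mesh cleanly. The single point needing care is the recognition that the two halves of $\mc{M}$-continuity play complementary roles here---(M1) produces the approximating $\mc{M}$-set, while (M2), via the openness of $\duam a$, is precisely the device that converts topological convergence into eventual membership in $\uparrow a$. Notably, the finite-subset apparatus appearing in Proposition~\ref{prop:tm_char} and in the definition of $\llm$ is not needed in this direction, since each relation $a \llm x$ already hands us a single open up-set $\duam a$ to test the converging net against.
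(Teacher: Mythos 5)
Your proof is correct and is essentially the paper's own argument: both reduce the lemma to showing that $\tau_{\mc{M}}$-convergence implies $\mc{M}$-convergence, take the set $A\in\mc{M}^+(P)$ with $A\subseteq\ddam x$ and $x\leq\bigvee A$ supplied by $\mc{M}$-continuity, and use the openness of each $\duam a$ (condition (M2)) to turn topological convergence into $a\leq x_i$ eventually. You even cite the conditions more carefully than the paper, whose proof writes ``(M2)'' where the existence of $A$ actually comes from (M1).
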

\begin{proof}
It suffices to show that for each net $(x_i)_{i\in I}$, $(x_i)_{i\in I}\xrightarrow{\tau_{\mc{M}}}x$ implies $(x_i)_{i\in I}\mconv x$. Assume that $(x_i)_{i\in I}\xrightarrow{\tau_{\mc{M}}}x$. By (M2), there exists $A\in\mc{M}^+(X)$ such that $A\sub\ddam x$ and $x\leq\bigvee A$. Let $a\in A$. Since $x\in\duam a\in\tau_{\mc{M}}$, we have that $x_i\in \duam a$ eventually, implying $x_i\geq a$ eventually. Hence $(x_i)_{i\in I}\mconv x$, as desired.
\end{proof}

The following lemma informs us that $\mc{M}$-convergence structure in a poset satisfying only (Iterated limits) axiom is enough to deduce the poset is $\mc{M}$-continuous. 

\begin{lemma}\label{lem:iter_implies_Mcts}
Let $P$ be a poset. If the $\mc{M}$-convergence structure in $P$ satisfies (Iterated limits) axiom then $P$ is $\mc{M}$-continuous.
\end{lemma}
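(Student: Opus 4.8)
The plan is to prove the contrapositive-free direction directly: assuming the $\mc{M}$-convergence structure satisfies (Iterated limits), I would verify conditions (M1) and (M2) of Definition \ref{def:Mcts} for an arbitrary $x \in P$. The natural strategy is to exploit the characterisation of $\llm$ in terms of convergence, namely Proposition \ref{prop:llm_is_aux}(4), together with a carefully engineered iterated-limit diagram whose inner nets are the canonical nets $(x_i)_{i \in I_A}$ coming from Proposition \ref{prop:net_from_M_set}.

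First I would address (M1). For fixed $x$, consider the collection of all $\mc{M}$-sets $A \in \mc{M}^+(P)$ with $x \leq \bigvee A$; each such $A$ yields, via Proposition \ref{prop:net_from_M_set}, a net $(y^A_i)_{i \in I_A}$ with $y^A_i \mconv \bigvee A$, hence (by Remark \ref{rem:Mconv_to_down}(2)) $y^A_i \mconv x$. The idea is to assemble these convergent nets into a single index structure and feed the resulting big net through (Iterated limits). Concretely, I would build an outer net converging to $x$ whose terms are themselves limits of the inner $I_A$-nets, so that the iterated limit is a net that $\mc{M}$-converges to $x$; reading off what $\mc{M}$-convergence of that iterated net means then forces the existence of an $\mc{M}$-set lying below $x$ in the $\llm$-order. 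The key leverage is that the terms $u$ of the $I_A$-nets are upper bounds $\ub(B)$ of finite subsets $B$, and for such terms one can show $u \llm x$ by verifying the defining finite-subset condition of $\ll_{\mc{M}}$ directly. This should yield an $\mc{M}$-set $A \subseteq \ddam x$ with $x \leq \bigvee A$.

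Next, for (M2), by the note following Definition \ref{def:Mcts} it suffices to show $\duam x$ satisfies (TM2): given $A \in \mc{M}^+(P)$ with $\bigvee A \in \duam x$, i.e. $x \llm \bigvee A$, I must produce a finite $B \subseteq A$ with $\ub(B) \subseteq \duam x$. I would argue by contradiction: if no such $B$ works, then for every finite $B \subseteq A$ there is some $u_B \in \ub(B)$ with $x \not\llm u_B$, i.e. (by Proposition \ref{prop:llm_is_aux}(4)) there is a net $\mc{M}$-converging to $u_B$ that is not eventually in $\uparrow x$. These net witnesses become the inner families in an (Iterated limits) application whose outer net is again the canonical $I_A$-net converging to $\bigvee A$; since $x \llm \bigvee A$, the iterated net must be eventually in $\uparrow x$, contradicting the choice of the non-convergent witnesses. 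The bookkeeping of indexing the inner witnesses over the directed set $I_A$ and matching them to the (Iterated limits) product order is where the argument is delicate.

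The main obstacle I expect is the precise construction of the iterated-limit diagram so that the index set $K = I \times \prod\{J(i) \mid i \in I\}$ of Theorem \ref{th:Kelley_char}(4) is genuinely directed and the resulting net $(x_{(i,f)})$ both has the right terms and actually $\mc{M}$-converges, rather than merely converging topologically. In particular, extracting a single $\mc{M}$-set from the iterated limit in part (M1), and ensuring the eventual-lower-bound structure of the big net is inherited correctly from the inner nets, requires care: one must track how finiteness of the witnessing subsets $B$ propagates through the product indexing so that the defining condition of $\llm$ can be read off at the end. Handling this combinatorial alignment, rather than any single conceptual step, will be the crux of the proof.
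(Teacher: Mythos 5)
Your overall architecture --- an outer net ranging over the family of test sets, canonical inner nets from Proposition \ref{prop:net_from_M_set}, a single application of (Iterated limits), and then reading off $\llm$ via Proposition \ref{prop:llm_is_aux}(4) --- is exactly the paper's, and your (M2) argument is essentially the paper's proof: the paper's outer net is indexed by the nonempty finite subsets $i\sub A$ (ordered by inclusion) with terms the chosen bad upper bounds $x_i\in\ub(i)$, rather than the full canonical $I_A$-net as you propose, but that difference is cosmetic, since at the indices $(u,\ub(B))$ where $x\llm u$ does hold you could simply insert constant inner nets, which $\mc{M}$-converge by Remark \ref{rem:Mconv_to_down}(1).

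There is, however, a genuine gap at the decisive step of (M1). You claim that the terms $u$ of the canonical $I_A$-nets satisfy $u\llm x$ ``by verifying the defining finite-subset condition of $\ll_{\mc{M}}$ directly''. This is false: those terms range over $\ub(B)$ for \emph{all} nonempty finite $B\sub A$, and such upper bounds are in general neither below $x$ nor $\llm x$. For instance, with $\mc{M}=\Dir$, $P=[0,1]$, $x=1$ and $A=[0,1)$, the element $u=1$ lies in $\ub(\sets{1/2})$ and so is a term of the canonical net, yet $1\not\ll 1$ in $[0,1]$ --- even though this poset is a continuous lattice. What must be placed inside $\ddam x$ is not the set of terms of the iterated net but the members $a$ of the $\mc{M}$-set $A'\sub\elb\brackets{\text{the iterated net}}$ extracted from its $\mc{M}$-convergence, and the paper's mechanism for this is a perturbation argument your proposal never articulates: given a test set $M\in\mc{M}^+(P)$ with $x\leq\bigvee M$, note that $M=A_{i_0}$ occurs as one of the outer coordinates (the paper deliberately equips the outer index set with the indiscrete preorder $i\leq j$ for all $i,j$, precisely so that \emph{every} coordinate, in particular $i_0$, lies beyond any threshold); take the threshold $(i^*,f^*)$ witnessing that $a$ is an eventual lower bound, write $f^*(i_0)=(u,\ub(B))$ with $B$ a nonempty finite subset of $M$, and observe that replacing the first coordinate by an arbitrary $t\in\ub(B)$ produces an index $\geq(i^*,f^*)$, because the order on $I_{A_{i_0}}$ ignores the first coordinate. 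Hence $t\geq a$ for all $t\in\ub(B)$, i.e.\ $\ub(B)\sub\uparrow a$, and since $M$ was arbitrary, $a\llm x$. Without this variation-over-$\ub(B)$ trick --- which is, incidentally, the same engine that drives your (M2) contradiction --- the (M1) half of your proposal does not close.
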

\begin{proof}
Let $x$ be an arbitrary element of $P$. We will show that (M1) and (M2) hold.
\begin{enumerate}[{\rm (M1)}]
\item Define the following collection
$$\mc{A}_x=\sets{A\in\mc{M}^+(P)\mid x\leq\bigvee A}.$$
Let $\mc{A}_x=\sets{A_i\mid i\in I}$. Equip $I$ with $\leq$ defined as $i\leq j$ for each $i,j\in I$. Then $I$ is a directed pre-ordered set. For each $i\in I$, let $x_i=\bigvee A_i$ to form a net $(x_i)_{i\in I}$. Since $\sets{x}\in\mc{M}^+(P)$, we then have $x_i\mconv x$. For each $i\in I$, we have a net $(x_{i,j})_{j\in I_{A_i}}$ $\mc{M}$-converging to $x_i$ as in Proposition \ref{prop:net_from_M_set}. By assumption we have that
$$(x_{(i,f)})_{(i,f)\in K}\mconv x,$$
where $K:=I\times \Pi_{i\in I}I_{A_i}$ is ordered by pointwise order and $x_{(i,f)}=x_{i,f(i)}$ for every $(i,f)\in K$. Then there exists $A\in \mc{M}^+(P)$ such that $x\leq \bigvee A$ and $A\subseteq\elb((x_{(i,f)})_{(i,f)\in K})$. It remains to show that $A\subseteq\ddam x$. Let $a\in A$ and $M\in\mc{M}^+(P)$ such that $x\leq \bigvee M$. Then there exists $i_0\in I$ and $(i^*,f^*)\in K$ such that $M=A_{i_0}$ and for every $(i,f)\in K$, $(i,f)\geq (i^*,f^*)$ implies $x_{i,f(i)}\geq a$. Let $f^*(i_0)=(u,\ub(B))\in I_{A_{i_0}}$, for some nonempty finite subset $B$ of $A_{i_0}$, and $t\in \ub(B)$. Define $f\in\Pi_{i\in I}I_{A_i}$ as follows
$$
f(i)=\left\{
\begin{array}{ll}
f^*(i),&\text{if }i\neq i_0\\
(t,\ub(B)), &\text{if }i=i_0
\end{array}
\right.
$$
We have that $(i_0,f)\geq (i^*,f^*)$. Hence $x_{i_0,f(i_0)}=x_{i_0,(t,\ub(B))}=t\geq a$. Therefore $\ub(B)\sub\uparrow a$ and hence $a\llm x$.
 
\item Let $A\in\mc{M}^+(X)$ such that $\bigvee A\in \duam x$. Suppose to the contrary that for each nonempty finite subset $i$ of $A$ there exists $x_i\in \ub(i)$ such that $x_i\notin\duam x$. Let $I$ be the collection of all nonempty finite subsets of $A$. Equipping $I$ with inclusion order, we have that $I$ is a directed pre-ordered set. Hence $(x_i)_{i\in I}$ is a net. If $a\in A$, then for every $i\in I$ such that $i\geq \sets{a}$ we have that $x_i\in \ub(i)\sub \uparrow a$. We have that $x_i\mconv \bigvee A$.
Let $i\in I$. Since $x\not\llm x_i$, in virtue of Proposition \ref{prop:llm_is_aux}(4), there exists a net $(x_{i,j})_{j\in J(i)}$ such that $x_{i,j}\mconv x_i$ and
\begin{equation*}\label{cont}
\forall j\in J(i).~\exists j^*\in J(i).~j^*\geq j \t{ and }x_{i,j^*}\ngeq x \qquad \text{(St1)}
\end{equation*}
By the fact that $\mc{M}$-convergence structure in $P$ satisfies (Iterated limits) axiom, we have that
$$(x_{(i,f)})_{(i,f)\in K}\mconv \bigvee A,$$
where $K:=I\times \Pi_{i\in I}J(i)$ is ordered by pointwise order and $x_{(i,f)}=x_{i,f(i)}$ for every $(i,f)\in K$.
Since $x\llm \bigvee A$, by Proposition \ref{prop:llm_is_aux}(4), there exists $(i_0,f_0)\in K$ such that for every $(i,f)\in K$, $(i,f)\geq (i_0,f_0)$ implies $x_{i,f(i)}\geq x$. Let $f_0(i_0)=j\in J(i_0)$. Let $j^*\in J(i_0)$ satisfy (St1). Define $f\in\Pi_{i\in I}J(i)$ as follows
$$
f(i)=\left\{
\begin{array}{ll}
f_0(i),&\text{if }i\neq i_0\\
j^*, &\text{if }i=i_0
\end{array}
\right.
$$
We have that $(i_0,f)\geq (i_0,f_0)$. Hence $x_{i_0,f(i_0)}=x_{i_0,j*}\geq x$, which contradicts (St1). Therefore we have that there exists a nonempty finite subset $B$ of $A$ such that $\ub(B)\sub \duam x$. Thus $\duam x\in\tau_{\mc{M}}$.
\end{enumerate}
\end{proof}

The following theorem is an immediate consequence of Lemmas \ref{lem:Mctsimplies_top} and \ref{lem:iter_implies_Mcts}.
\begin{theorem}\label{th:Mcts_equiv}
The following statements are equivalent for a poset $P$.
\begin{enumerate}
\item[{\rm (1)}] $P$ is $\mc{M}$-continuous.
\item[{\rm (2)}] The $\mc{M}$-convergence structure in $P$ is topological.
\end{enumerate}
\end{theorem}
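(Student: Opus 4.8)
The plan is to derive the equivalence directly from the two preceding lemmas, using Kelley's characterisation (Theorem \ref{th:Kelley_char}) as the bridge for one direction. I would treat the two implications separately.

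For the implication $(1) \Rightarrow (2)$, there is essentially nothing new to do: this is exactly the statement of Lemma \ref{lem:Mctsimplies_top}. Assuming $P$ is $\mc{M}$-continuous, that lemma already shows that every net converging topologically with respect to $\tau_{\mc{M}}$ also $\mc{M}$-converges, which is precisely what it means for the $\mc{M}$-convergence structure to be topological. So I would simply cite Lemma \ref{lem:Mctsimplies_top}.

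For the converse $(2) \Rightarrow (1)$, I would argue as follows. Suppose the $\mc{M}$-convergence structure in $P$ is topological. By Theorem \ref{th:Kelley_char}, a topological net convergence structure must satisfy all four axioms, namely (Constants), (Subnets), (Divergence), and (Iterated limits); in particular it satisfies the (Iterated limits) axiom. Lemma \ref{lem:iter_implies_Mcts} then applies verbatim, since its hypothesis is exactly that the $\mc{M}$-convergence structure satisfies (Iterated limits), and its conclusion is that $P$ is $\mc{M}$-continuous. Combining the two implications yields the equivalence.

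I expect no genuine obstacle at this stage, because all of the substantive work has already been carried out in Lemmas \ref{lem:Mctsimplies_top} and \ref{lem:iter_implies_Mcts}. The only point worth flagging explicitly is the logical bookkeeping in the converse direction: one must invoke Theorem \ref{th:Kelley_char} to pass from the hypothesis that the structure is \emph{topological} to the weaker-looking hypothesis that it merely satisfies the single (Iterated limits) axiom, which is the form in which Lemma \ref{lem:iter_implies_Mcts} is phrased. Since ``topological'' forces all four Kelley axioms simultaneously, isolating (Iterated limits) is legitimate, and the proof concludes at once.
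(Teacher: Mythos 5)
Your proposal is correct and is exactly the paper's argument: the paper derives Theorem \ref{th:Mcts_equiv} as an immediate consequence of Lemmas \ref{lem:Mctsimplies_top} and \ref{lem:iter_implies_Mcts}, with the converse direction using, just as you note, that a topological convergence structure satisfies the (Iterated limits) axiom by Theorem \ref{th:Kelley_char}. Your explicit flagging of that bookkeeping step is the only elaboration beyond what the paper states, and it is accurate.
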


As a corollary of Theorem \ref{th:Mcts_equiv} above, we have the following well-known result.
\begin{corollary}\cite{zhaozhao05}
Let $P$ be a poset. Then the Scott-convergence structure in $P$ is topological if and only $P$ is a continuous poset.
\end{corollary}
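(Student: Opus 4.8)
The plan is to specialise Theorem \ref{th:Mcts_equiv} to the subset selection $\mc{M}=\Dir$, which assigns to each poset $P$ the collection $\Dir(P)$ of all directed subsets of $P$, and then to translate both sides of the resulting equivalence into the language of the statement by invoking remarks already proved.

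First I would verify that $\Dir$ is a minimal subset selection in the sense of the definition of minimal subset selection: directed sets are nonempty by convention, so $\varnothing\notin\Dir(P)$, while every singleton is trivially directed, so $\sets{x}\in\Dir(P)$ for each $x\in P$. Consequently $\Dir^+(P)$ is precisely the collection of directed subsets of $P$ possessing a supremum, and all the machinery developed above applies verbatim to this $\mc{M}$.

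Next I would read off the two sides of Theorem \ref{th:Mcts_equiv} for this choice of $\mc{M}$. On the convergence side, Remark \ref{rem:Mconv_to_down}(3) asserts that for $\mc{M}=\Dir$ the $\mc{M}$-convergence structure is exactly the Scott-convergence structure, so the condition ``the $\mc{M}$-convergence structure in $P$ is topological'' is literally the condition ``the Scott-convergence structure in $P$ is topological''. On the continuity side, the remark following Definition \ref{def:Mcts} records that when $\mc{M}(P)=\Dir(P)$ the relation $\llm$ coincides with the usual way-below relation $\ll$, and that $P$ is $\mc{M}$-continuous if and only if $P$ is a continuous poset; here the point worth a line is that (M2) is automatically implied by (M1), which is the familiar fact that the set of elements way-above a given point is Scott-open in a continuous poset.

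With these two identifications in hand, the corollary is immediate: substituting them into Theorem \ref{th:Mcts_equiv} gives that $P$ is continuous if and only if $P$ is $\mc{M}$-continuous, if and only if the $\mc{M}$-convergence structure in $P$ is topological, if and only if the Scott-convergence structure in $P$ is topological. There is no genuine obstacle beyond confirming the two translations, both already isolated as earlier remarks; the only step I would spell out in detail is the coincidence $\llm=\ll$, which follows directly from the definitions upon noting that a nonempty finite subset $B$ of a directed set $A$ always admits an upper bound inside $A$, so that $\ub(B)\sub\uparrow x$ for some finite $B\sub A$ holds exactly when some member of $A$ dominates $x$.
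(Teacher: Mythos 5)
Your proposal is correct and is exactly the paper's intended argument: the corollary is stated as an immediate consequence of Theorem \ref{th:Mcts_equiv}, with Remark \ref{rem:Mconv_to_down}(3) identifying $\Dir$-convergence with Scott-convergence and the remark following Definition \ref{def:Mcts} identifying $\Dir$-continuity with ordinary continuity. Your extra verification that $\llm$ coincides with the usual way-below relation (via upper bounds of finite subsets inside a directed set) is a correct and welcome filling-in of the one detail the paper leaves implicit.
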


Notice that $\mc{M}$-convergence structures can be defined in any poset. In particular, given a $T_0$ space $X$, we have the $\mc{M}$-convergence structure in the specialisation poset induced by $X$. Consider the assignment $\Irr$ assigning any space to the collection $\Irr(X)$ of all irreducible sets in $X$. Clearly $\Irr(X)$ contains all singletons. Hence $\Irr$ can be considered as a minimal subset selection. Making use of this subset selection, one may define $\Irr$-convergence structure (see, e.g., \cite{andradiho17,zhaolu17}).

\begin{definition}
A $T_0$ space is said to be \e{$\Irr$-continuous} if its induced specialisation poset is $\Irr$-continuous in the sense of Definition \ref{def:Mcts}, when $\Irr$ is considered as a minimal subset selection.
\end{definition}

The following result is a direct consequence of Theorem \ref{th:Mcts_equiv}.

\begin{corollary}\cite{zhaolu17}
Let $X$ be a space. Then the $\Irr$-convergence structure in $X$ is topological if and only if $X$ is $\Irr$-continuous.
\end{corollary}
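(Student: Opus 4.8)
The plan is to deduce the corollary directly from Theorem \ref{th:Mcts_equiv} by specialising the minimal subset selection $\mc{M}$ to be $\Irr$. First I would confirm that $\Irr$ genuinely qualifies as a minimal subset selection on the posets arising as specialisation posets: for any $T_0$ space $X$, no irreducible set is empty, so $\varnothing\notin\Irr(X)$, and every singleton $\sets{x}$ is irreducible, so $\sets{x}\in\Irr(X)$. Thus the entire machinery of Section 3 applies verbatim with $\mc{M}=\Irr$ on the specialisation poset of $X$.

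The next step is to observe that the $\Irr$-convergence structure in the space $X$ is, by its very definition, nothing other than the $\mc{M}$-convergence structure (for $\mc{M}=\Irr$) in the specialisation poset $P$ induced by $X$: a net $\brackets{x_i}_{i\in I}$ converges to $x$ if and only if there is some $A\in\Irr^+(P)$ with $x\leq\bigvee A$ and $A\sub\elbb{\brackets{x_i}_{i\in I}}$, where the order, the suprema, and the eventual lower bounds are all computed in $P$. Hence the two convergence structures coincide as classes of tuples, and therefore one is topological precisely when the other is.

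Finally, applying Theorem \ref{th:Mcts_equiv} with $\mc{M}=\Irr$ shows that this convergence structure is topological if and only if $P$ is $\mc{M}$-continuous, that is, if and only if $P$ is $\Irr$-continuous in the sense of Definition \ref{def:Mcts}; and by the definition of an $\Irr$-continuous space this is exactly the assertion that $X$ is $\Irr$-continuous. Chaining these equivalences yields the corollary.

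The only point requiring genuine care is the identification made in the second paragraph. One must check that the order-theoretic data entering the definition of $\mc{M}$-convergence (the specialisation order $\leq$, the suprema $\bigvee A$, and the eventual-lower-bound set $\elb$) are all taken in the specialisation poset of $X$, and that $\Irr^+(P)$ really is the collection of irreducible subsets of $X$ possessing a supremum in this order. Since $\Irr$-convergence is defined through the induced specialisation poset in the first place, this identification is immediate, and so the corollary follows with no work beyond unwinding the definitions.
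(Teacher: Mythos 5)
Your proposal is correct and follows exactly the paper's route: the paper likewise notes that $\Irr$ qualifies as a minimal subset selection, defines an $\Irr$-continuous space via its specialisation poset, and then obtains the corollary as a direct consequence of Theorem \ref{th:Mcts_equiv}. Your extra care in identifying the $\Irr$-convergence structure in $X$ with the $\mc{M}$-convergence structure in the specialisation poset is precisely the (implicit) unwinding the paper relies on.
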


We recall the following definition and theorem from \cite{zhouzhao07}.
\begin{definition}\cite{zhouzhao07}
A poset $P$ is called $\alpha(\mc{M})$-continuous if for every $x\in P$, $\ddam x$ is an $\mc{M}$-set whose supremum equals $x$.
\end{definition}
\begin{theorem}\cite[Theorem 3.1]{zhouzhao07}\label{th:fromzz07}
Let $P$ be a poset such that the following two conditions hold for every $x\in P$:
\begin{enumerate}
\item[{\rm (1)}] $\ddam x\in\mc{M}(P)$, and
\item[{\rm (2)}] $\sets{y\in P\mid \exists z\in P\t{ s.t. }y\llm z\llm x}\in \mc{M}(P)$.
\end{enumerate}
Then the $\mc{M}$-convergence structure in $P$ is topological if and only if $P$ is $\alpha(\mc{M})$-continuous.
\end{theorem}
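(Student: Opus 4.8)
The plan is to reduce everything to Theorem \ref{th:Mcts_equiv}, which already equates ``the $\mc{M}$-convergence structure in $P$ is topological'' with ``$P$ is $\mc{M}$-continuous''. Thus, under hypotheses (1) and (2), it suffices to prove that $P$ is $\mc{M}$-continuous if and only if $P$ is $\alpha(\mc{M})$-continuous. I would organise the two implications separately, the forward one being routine and the converse carrying all the weight.

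First I would dispatch ``$\mc{M}$-continuous $\Rightarrow$ $\alpha(\mc{M})$-continuous''. Fix $x\in P$. By (M1) there is $A\in\mc{M}^+(P)$ with $A\sub\ddam x$ and $x\leq\bigvee A$; since every $a\in A$ satisfies $a\llm x$, hence $a\leq x$ by Proposition \ref{prop:llm_is_aux}(1), we get $\bigvee A=x$. As $x$ is an upper bound of $\ddam x$ and $A\sub\ddam x$ with $\bigvee A=x$, any upper bound of $\ddam x$ dominates $\bigvee A=x$, so $\bigvee\ddam x=x$. Combined with hypothesis (1), namely $\ddam x\in\mc{M}(P)$, this is exactly $\alpha(\mc{M})$-continuity. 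This direction uses only (M1) and hypothesis (1).

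The substance is the converse, ``$\alpha(\mc{M})$-continuous $\Rightarrow$ $\mc{M}$-continuous''. Here (M1) is immediate: take $A=\ddam x$, which by assumption lies in $\mc{M}^+(P)$ with $\bigvee\ddam x=x$. The real work is (M2), i.e. $\duam x\in\tau_{\mc{M}}$, and by the note following Definition \ref{def:Mcts} together with Proposition \ref{prop:tm_char} I only need to verify (TM2). So suppose $A\in\mc{M}^+(P)$ with $x\llm\bigvee A=:y$, and I must produce a nonempty finite $B\sub A$ with $\ub(B)\sub\duam x$. Hypothesis (2) is precisely what makes this work: it guarantees that $W:=\sets{w\mid \exists z.\ w\llm z\llm y}$ is an $\mc{M}$-set. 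Using $\alpha(\mc{M})$-continuity at $y$ and at each $z\llm y$ (so that $\bigvee\ddam z=z$), I would first establish the interpolation-type identity $\bigvee W=y$, whence $W\in\mc{M}^+(P)$. Since $x\llm y=\bigvee W$, there is a nonempty finite $\sets{w_1,\dots,w_n}\sub W$ with $\ub(\sets{w_1,\dots,w_n})\sub\uparrow x$, and I may choose $z_k$ with $w_k\llm z_k\llm y$. Each $z_k\llm\bigvee A$ yields a finite $B_k\sub A$ with $\ub(B_k)\sub\uparrow z_k$, and I would set $B=\bigcup_{k}B_k$.

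To close the argument I would isolate a small finite-join lemma for $\llm$: if $w_1,\dots,w_n\llm t$ and $\ub(\sets{w_1,\dots,w_n})\sub\uparrow x$, then $x\llm t$; its proof simply re-runs the union-of-finite-subsets trick through the defining property of $\llm$ together with Proposition \ref{prop:llm_is_aux}(2). Granting this, for any $t\in\ub(B)$ one has $t\geq z_k$ for every $k$, so $w_k\llm z_k\leq t$ gives $w_k\llm t$ by Proposition \ref{prop:llm_is_aux}(2), and the lemma yields $x\llm t$, i.e. $t\in\duam x$; hence $\ub(B)\sub\duam x$, as required. I expect the main obstacle to be exactly this (M2) argument: managing the two layers of the way-below relation furnished by hypothesis (2), proving $\bigvee W=y$, and extracting the finite-join lemma. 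These jointly replace the interpolation step that, in the directed (continuous) setting, is supplied for free by directedness.
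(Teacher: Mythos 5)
Your proposal is correct, but note that the paper itself offers no proof of this statement: it is quoted from \cite[Theorem 3.1]{zhouzhao07} purely for comparison, and the example following it is there to show that its hypotheses are genuinely restrictive. So the relevant comparison is with the original direct verification of Zhou and Zhao, and your route is genuinely different: you factor the theorem through Theorem \ref{th:Mcts_equiv}, reducing it to the purely order-theoretic claim that, under hypotheses (1) and (2), $\mc{M}$-continuity and $\alpha(\mc{M})$-continuity coincide. All three nontrivial steps check out. The forward direction is as you say: $A\sub\ddam x\sub\downarrow x$ together with $\bigvee A=x$ forces $\bigvee\ddam x=x$, and hypothesis (1) supplies $\ddam x\in\mc{M}(P)$. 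For (M2), your identity $\bigvee W=y$ is valid: $y$ is an upper bound of $W$ by Proposition \ref{prop:llm_is_aux}(1)--(2), and any upper bound $u$ of $W$ dominates $\bigvee\ddam z=z$ for each $z\llm y$ (since $\ddam z\sub W$), hence dominates $\bigvee\ddam y=y$; hypothesis (2) is exactly what places $W$ in $\mc{M}^+(P)$ so that the definition of $x\llm y$ can be applied to it. Your finite-join lemma is also sound: for the union $C=\bigcup_k C_k$ of the finite witnesses for $w_k\llm t$ one has $\ub(C)=\bigcap_k\ub(C_k)\sub\ub(\sets{w_1,\dots,w_n})\sub\uparrow x$, so $x\llm t$. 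What your approach buys is conceptual economy: the cited theorem becomes a formal corollary of the paper's main result, exactly parallel to how the paper derives Theorem \ref{th:1} from Theorem \ref{th:Mcts_equiv}, and it isolates which hypothesis each implication actually needs (hypothesis (1) only for ``topological $\Rightarrow$ $\alpha(\mc{M})$-continuous'', hypothesis (2) only for the converse). What it costs is self-containment: it leans on Lemmas \ref{lem:Mctsimplies_top} and \ref{lem:iter_implies_Mcts}, whereas the original argument establishes the equivalence from Kelley's axioms directly, without any intervening notion of $\mc{M}$-continuity.
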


One can see that Theorem \ref{th:fromzz07} is not complete in the sense of \cite{sunli18}. More precisely, the equivalence given in the theorem only holds in a certain class of posets. In the following we provide a minimal subset selection $\mc{M}$ and a poset which is not in the class but the $\mc{M}$-convergence structure in it is topological.

\begin{example}
Let $\mc{M}={\rm ACh}$ and $P=\sets{a,b,c}$ with $a\leq c$ and $b\leq c$. We have that $\mc{M}(P)=\mc{M}^+(P)=\sets{\sets{a},\sets{b},\sets{c},\sets{a,b}}$. It is easy to verify that $x\llm y$ if and only if $x\leq y$. We then have that $P$ is $\mc{M}$-continuous. According to Theorem \ref{th:Mcts_equiv}, the $\mc{M}$-convergence structure in $P$ is topological. But we have that $\ddam c=P$ is not an antichain, which implies that $P$ is not $\alpha(\mc{M})$-continuous.
\end{example}

\section{$\mc{MN}$-convergence Structures in Posets}

In this section, we move our focus on a certain generalisation of order-convergence structure. Throughout this section we assume that two certain minimal subset selections $\mc{M}$ and $\mc{N}$ are already given.

\begin{definition}\label{def:MNconve}
Let $P$ be a poset. We say that a net $(x_i)_{i\in I}$ \emph{$\mc{MN}$-converges to $x\in P$}, denoted by $(x_i)_{i\in I}\mnconv x$ or $x_i\mnconv x$, if there exist $M\in\mc{M}^+(P)$ and $S\in\mc{N}^-(P)$ such that
\begin{enumerate}
\item $x=\bigvee A=\bigwedge S$, and
\item for each $a\in A$ and $s\in S$, $x_i\in\uparrow a\cap \downarrow s$ eventually.
\end{enumerate}
\end{definition}

\begin{remark}
\begin{enumerate}[{\rm (1)}]
\item A net can only $\mc{MN}$-converge to at most one element.
\item The $\mc{MN}$-convergence structure in a poset $P$ satisfies (Constants) and (Subnets) axioms.
\end{enumerate}
\end{remark}

Similar to $\mc{M}$-convergence case, we can construct a certain net from given $\mc{M}$-set and $\mc{N}$-set, and give an order description of the topology induced by $\mc{MN}$-convergence structure.

\begin{proposition}\label{prop:net_from_MN_set}
For each $A\in\mc{M}^+(P)$ and $S\in\mc{N}^-(P)$ such that $\bigvee A=\bigwedge S=:x$, define $$I_{AS}=\sets{(u,\ub(B)\cap\lb(T))\mid B\subseteq_{{\rm fin}} A,T\subseteq_{{\rm fin}} S, B\neq\varnothing,T\neq\varnothing,\t{ and }u\in\ub(B)\cap\lb(T)}$$ and equip it with the following order:
$$(u_1,\ub(B_1)\cap\lb(T_1))\leq (u_2,\ub(B_2)\cap\lb(T_2))\t{ if and only if }\ub(B_1)\cap\lb(T_1)\supseteq \ub(B_2)\cap\lb(T_2).$$ For each $(u,\ub(B)\cap\lb(T))\in I_{AS}$, define $x_{(u,\ub(B)\cap\lb(T))}=u$ to form a net $(x_i)_{i\in I_{AS}}$. Then $x_i\mnconv x$.
\end{proposition}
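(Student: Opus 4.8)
The plan is to verify the two defining conditions of $\mc{MN}$-convergence (Definition \ref{def:MNconve}) directly, using $A$ itself as the witnessing $\mc{M}$-set and $S$ itself as the witnessing $\mc{N}$-set. Condition (1) then holds immediately, since $\bigvee A = \bigwedge S = x$ is precisely the hypothesis. The substance of the proof lies in two essential checks: first, that $I_{AS}$ is a nonempty directed preorder, so that $\brackets{x_i}_{i\in I_{AS}}$ is genuinely a net; and second, that condition (2) holds, i.e., for each $a\in A$ and $s\in S$ we have $x_i\in\uparrow a\cap\downarrow s$ eventually.

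The key observation underlying both checks is that $x$ serves as a \emph{universal witness}: for every nonempty finite $B\sub_{{\rm fin}} A$ and $T\sub_{{\rm fin}} S$, we have $x\in\ub(B)\cap\lb(T)$, because $b\leq\bigvee A=x$ for all $b\in B$ and $x=\bigwedge S\leq t$ for all $t\in T$. In particular every set $\ub(B)\cap\lb(T)$ arising in the definition of $I_{AS}$ is nonempty, so $I_{AS}$ itself is nonempty. For directedness, given two indices $\brackets{u_1,\ub(B_1)\cap\lb(T_1)}$ and $\brackets{u_2,\ub(B_2)\cap\lb(T_2)}$, I would take $B:=B_1\cup B_2$ and $T:=T_1\cup T_2$; then $\ub(B)\cap\lb(T)=\brackets{\ub(B_1)\cap\lb(T_1)}\cap\brackets{\ub(B_2)\cap\lb(T_2)}$ is contained in each of the two given sets, so $\brackets{x,\ub(B)\cap\lb(T)}$ is a common upper bound by the ordering on $I_{AS}$. (The relation is only a preorder, not a partial order, since the first coordinate is ignored by the order; this is harmless for nets.)

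For the eventual-membership condition, fix $a\in A$ and $s\in S$ and set $i_0:=\brackets{x,\ub(\sets{a})\cap\lb(\sets{s})}$, which is a legitimate index by the universal-witness remark. For any $i=\brackets{u,\ub(B)\cap\lb(T)}\geq i_0$, the ordering gives $\ub(B)\cap\lb(T)\sub\ub(\sets{a})\cap\lb(\sets{s})=\uparrow a\cap\downarrow s$, and since $x_i=u\in\ub(B)\cap\lb(T)$ we conclude $x_i\in\uparrow a\cap\downarrow s$. Thus $x_i\in\uparrow a\cap\downarrow s$ eventually, establishing condition (2) and hence $x_i\mnconv x$. The only mildly delicate point — and the main thing to get right — is the observation that $x$ lies in every relevant intersection $\ub(B)\cap\lb(T)$; once this is in hand, both directedness and the convergence verification are immediate, exactly mirroring the proof of Proposition \ref{prop:net_from_M_set}.
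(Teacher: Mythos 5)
Your proof is correct and takes essentially the same route as the paper's: the same index set $I_{AS}$ ordered by reverse inclusion, with $A$ and $S$ themselves serving as the witnessing $\mc{M}$- and $\mc{N}$-sets, and the tail argument run from an index of the form $(\,\cdot\,,\ub(\sets{a})\cap\lb(\sets{s}))$. The only differences are cosmetic: you use $x$ as the universal witness in each index and spell out directedness via $B_1\cup B_2$ and $T_1\cup T_2$, whereas the paper uses the pair $(a,\ub(\sets{a})\cap\lb(\sets{s}))$ and leaves nonemptiness and directedness as asserted facts.
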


\begin{proposition}\label{prop:tmn_char}
Let $P$ be a poset and $\tau_{\mc{MN}}$ be the topology induced by the $\mc{MN}$-convergence structure in $P$. The following two statements are equivalent.
\begin{enumerate}
\item[{\rm (1)}] $V\in \tau_{\mc{MN}}$.
\item[{\rm (2)}] For every $A\in\mc{M}^+(P)$ and $S\in\mc{N}^-(P)$, if $\bigvee A=\bigwedge S\in V$ then there exist nonempty finite subsets $B$ of $A$ and $T$ of $S$ such that $\ub(B)\cap\lb(T)\subseteq V$.
\end{enumerate}
\end{proposition}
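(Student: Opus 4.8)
The plan is to unwind the definition of $\tau_{\mc{MN}}$ and prove the two implications separately, in close parallel to the proof of Proposition~\ref{prop:tm_char}; the one structural difference to keep in mind is that the characterisation here carries no ``upper set'' clause analogous to (TM1). The reason is worth isolating at the outset: in the $\mc{M}$-case that clause arose by feeding a constant net into Remark~\ref{rem:Mconv_to_down}(2), but an $\mc{MN}$-convergent net has a \emph{unique} limit (the equalities $x=\bigvee A=\bigwedge S$ pin $x$ down from both sides), so the constant-net trick is unavailable and no monotonicity condition survives. Thus I expect the single condition (2) to be exactly what is forced.

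For (1)$\Rightarrow$(2), I would assume $V\in\tau_{\mc{MN}}$ and take any $A\in\mc{M}^+(P)$ and $S\in\mc{N}^-(P)$ with $\bigvee A=\bigwedge S=:x\in V$. The key device is the canonical net $(x_i)_{i\in I_{AS}}$ of Proposition~\ref{prop:net_from_MN_set}, which $\mc{MN}$-converges to $x$. Since $V$ is $\tau_{\mc{MN}}$-open and $x\in V$, this net lies in $V$ eventually, say $x_j\in V$ for all $j\geq i_0$ with $i_0=(u,\ub(B)\cap\lb(T))$; by construction of $I_{AS}$ the finite sets $B\subseteq A$ and $T\subseteq S$ are automatically nonempty. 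Running over all $t\in\ub(B)\cap\lb(T)$, each index $(t,\ub(B)\cap\lb(T))$ dominates $i_0$ (its defining inclusion holds with equality), so $t=x_{(t,\ub(B)\cap\lb(T))}\in V$. Hence $\ub(B)\cap\lb(T)\subseteq V$, which is (2).

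For (2)$\Rightarrow$(1), I would start from an arbitrary net with $(x_i)_{i\in I}\mnconv x$ and $x\in V$, and must show $x_i\in V$ eventually. The convergence supplies $A\in\mc{M}^+(P)$ and $S\in\mc{N}^-(P)$ with $x=\bigvee A=\bigwedge S$ and $x_i\in\uparrow a\cap\downarrow s$ eventually for each $a\in A$ and $s\in S$. Applying (2) to this $A$ and $S$ yields nonempty finite $B\subseteq A$ and $T\subseteq S$ with $\ub(B)\cap\lb(T)\subseteq V$. The only thing to verify is that the finitely many eventual conditions $x_i\geq a$ (for $a\in B$) and $x_i\leq s$ (for $s\in T$) can be merged into a single tail, which is exactly where finiteness of $B$ and $T$ is used; on that common tail $x_i\in\ub(B)\cap\lb(T)\subseteq V$.

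The argument is largely mechanical once Proposition~\ref{prop:net_from_MN_set} is in hand, so I do not anticipate a genuine obstacle; the only point demanding care is the bookkeeping in the forward direction — correctly reading off the nonempty finite witnesses $B$ and $T$ from the eventual index $i_0$ and checking the direction of the order on $I_{AS}$ — together with the conceptual observation, flagged above, that the absence of a (TM1)-type clause is not an oversight but a consequence of the uniqueness of $\mc{MN}$-limits.
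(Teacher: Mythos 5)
Your proof is correct and takes essentially the same route as the paper's: the forward direction feeds the canonical net $(x_i)_{i\in I_{AS}}$ of Proposition~\ref{prop:net_from_MN_set} into openness of $V$ and reads off the nonempty finite witnesses $B$ and $T$ from the eventual index $i_0=(u,\ub(B)\cap\lb(T))$, while the converse merges the finitely many eventual bounds coming from $B$ and $T$ into one tail lying in $\ub(B)\cap\lb(T)\subseteq V$. Your opening observation --- that the absence of a (TM1)-type upper-set clause is forced by the uniqueness of $\mc{MN}$-limits, which disables the constant-net argument used in Proposition~\ref{prop:tm_char} --- is accurate, though the paper passes over this point silently.
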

\begin{proof}
Let $V\in \tau_{\mc{M}}$, $A\in\mc{M}^+(P)$, and $S\in\mc{N}^-(P)$ such that $\bigvee A=\bigwedge S\in V$. We consider the net $(x_i)_{i\in I_{AS}}$ as in Proposition \ref{prop:net_from_MN_set}. Since $V\in \tau_{\mc{MN}}$, there exists $i_0\in I_{AS}$ such that $j\geq i_0$ implies $x_i\in V$. Let $i_0=(u,\ub(B)\cap\lb(T))$. Then for each $v\in\ub(B)\cap\lb(T)$ it holds that $j:=(v,\ub(B)\cap\lb(T))\geq i_0$, we have that $v\in V$. Therefore $\ub(B)\cap\lb(T)\subseteq V$.

Conversely, let $(x_i)_{i\in I}$ $\mc{MN}$-converge to $x$, and $x\in V$. Then there exists $A\in\mc{M}^+(P)$ and $S\in\mc{N}^-(P)$ such that $x= \bigvee A=\bigwedge S$ and for each $a\in A$ and $s\in S$, $x_i\in\uparrow a\cap\downarrow s$ eventually. By assumption, there exist nonempty finite subsets $B$ of $A$ and $T$ of $S$ such that $\ub(B)\cap\lb(T)\sub V$. Since $B$ and $T$ are finite and for each $a\in A$ and $s\in S$, $x_i\in\uparrow a\cap\downarrow s$ eventually, we have that $x_i\in\ub(B)\cap\lb(T)$ eventually. Therefore $V\in\tau_{\mc{MN}}$.
\end{proof}

Next we define two following relations on a poset making use of $\mc{M}$-set and $\mc{N}$-set:
\begin{enumerate}
\item[{\rm (1)}] $x\llmn y$ if and only if for every $A\in\mc{M}^+(P)$ and $S\in\mc{N}^-(P)$ such that $y=\bigvee A=\bigwedge S$ there exist nonempty finite subsets $B$ of $A$ and $T$ of $S$ such that $\ub(B)\cap\lb(T)\sub\uparrow x$;
\item[{\rm (2)}] $y\triangleleft_{\mc{MN}} z$ if and only if for every $A\in\mc{M}^+(P)$ and $S\in\mc{N}^-(P)$ such that $y=\bigvee A=\bigwedge S$ there exist nonempty finite subsets $B$ of $A$ and $T$ of $S$ such that $\ub(B)\cap\lb(T)\sub\downarrow z$.
\end{enumerate}

\begin{proposition}\label{prop:llmn_and_trmn}
For a poset $P$ and $x,y,z\in P$ the following hold.
\begin{enumerate}[{\rm(1)}]
\item $x\llmn y$ implies $x\leq y$.
\item $y\trmn z$ implies $y\leq z$.
\item If $P$ has a bottom element $\bot$, then $\bot\llmn x$ for every $x\in P$.
\item If $P$ has a top element $\top$, then $x\trmn \top$ for every $x\in P$.
\item $x\llmn y$ if and only if for every net $\brackets{x_i}_{i\in I}$, $x_i\mnconv y$ implies $x_i\in\uparrow x$ eventually.
\item $y\trmn z$ if and only if for every net $\brackets{x_i}_{i\in I}$, $x_i\mnconv y$ implies $x_i\in\downarrow z$ eventually.
\item $x\llmn y\trmn z$ if and only if for every net $\brackets{x_i}_{i\in I}$, $x_i\mnconv y$ implies $x_i\in\uparrow x\cap \downarrow z$ eventually.
\item If $x\llmn y$, then for every $A\in\mc{M}^+(P)$, $\bigvee A=y$ implies there exists a nonempty finite subset $B$ of $A$ such that $\ub(B)\cap\lb(\sets{y})\sub\uparrow x$.
\item If $y\trmn z$, then for every $S\in\mc{N}^-(P)$, there exists a nonempty finite subset $T$ of $S$ such that $\ub(\sets{y})\cap\lb(T)\sub \downarrow z$.
\end{enumerate}
\end{proposition}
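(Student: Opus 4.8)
The plan is to dispatch the order-theoretic items (1)--(4) first, then treat the net-theoretic characterisations (5)--(7) as the technical core, and finally derive the one-sided refinements (8) and (9).

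For (1) and (2) I would exploit that every singleton is simultaneously an $\mc{M}$-set and an $\mc{N}$-set with $\bigvee\sets{y}=y=\bigwedge\sets{y}$. Applying the definition of $x\llmn y$ to the pair $A=S=\sets{y}$ forces $B=\sets{y}$ and $T=\sets{y}$, so that $\ub(\sets{y})\cap\lb(\sets{y})=\uparrow y\cap\downarrow y=\sets{y}\sub\uparrow x$, whence $x\leq y$; item (2) is the order-dual statement with $\downarrow z$. For (3) and (4), the presence of a bottom (resp.\ top) makes $\uparrow\bot=P$ (resp.\ $\downarrow\top=P$), so the containment required by the definition holds vacuously for any admissible $B,T$, giving $\bot\llmn x$ (resp.\ $x\trmn\top$).

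The heart of the proposition is (5) and its mirror image (6), which I would prove exactly in the style of Proposition \ref{prop:llm_is_aux}(4). For the forward direction, assume $x\llmn y$ and take any net with $x_i\mnconv y$, witnessed by $A\in\mc{M}^+(P)$ and $S\in\mc{N}^-(P)$ with $y=\bigvee A=\bigwedge S$ and $x_i\in\uparrow a\cap\downarrow s$ eventually for each $a\in A$, $s\in S$. The definition of $\llmn$ supplies nonempty finite $B\sub A$ and $T\sub S$ with $\ub(B)\cap\lb(T)\sub\uparrow x$; because $B$ and $T$ are \emph{finite}, directedness of the index set lets me amalgamate the finitely many eventual memberships into one, yielding $x_i\in\ub(B)\cap\lb(T)\sub\uparrow x$ eventually. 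For the converse, I would feed the canonical net $(x_i)_{i\in I_{AS}}$ of Proposition \ref{prop:net_from_MN_set} into the hypothesis: it $\mc{MN}$-converges to $y$, hence lies eventually in $\uparrow x$, and reading off an eventual index $i_0=(u,\ub(B)\cap\lb(T))$ shows precisely that $\ub(B)\cap\lb(T)\sub\uparrow x$, i.e.\ $x\llmn y$. Item (6) follows by the order-dual argument with $\lb(T)$ and $\downarrow z$ in place of $\ub(B)$ and $\uparrow x$.

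With (5) and (6) in hand, (7) is a formal combination: the forward direction applies them separately and intersects the two eventual memberships, while the converse reads off $\uparrow x$ and $\downarrow z$ from $\uparrow x\cap\downarrow z$ to recover $x\llmn y$ and $y\trmn z$. Finally, (8) and (9) are one-sided specialisations obtained by freezing one selection to a singleton: for (8), given $A\in\mc{M}^+(P)$ with $\bigvee A=y$, I would pair it with $S=\sets{y}\in\mc{N}^-(P)$ and invoke $x\llmn y$; every nonempty finite $T\sub\sets{y}$ equals $\sets{y}$, so the resulting $B$ satisfies $\ub(B)\cap\lb(\sets{y})\sub\uparrow x$. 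Item (9) mirrors this with $A=\sets{y}$. The only genuine subtlety throughout is the use of finiteness of $B$ and $T$ to convert finitely many separate eventual conditions into a single one; once that observation and the canonical net of Proposition \ref{prop:net_from_MN_set} are available, every step is routine.
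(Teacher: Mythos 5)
Your proof is correct and follows essentially the same route as the paper: it likewise dispatches (1), (2), (8) and (9) via $\sets{y}\in\mc{M}^+(P)\cap\mc{N}^-(P)$, gets (3) and (4) straight from the definitions, proves (5) by exactly the paper's two-step argument (finiteness of $B$ and $T$ to merge the eventual bounds in the forward direction, the canonical net of Proposition \ref{prop:net_from_MN_set} for the converse), treats (6) as the dual, and obtains (7) formally from (5) and (6). There are no gaps or meaningful differences to report.
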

\begin{proof}
(1), (2), (8), and (9) are immediate since $\sets{y}\in\mc{M^+(P)}\cap\mc{N}^-(P)$, (3) and (4) are clear from the definition of $\llmn$ and $\trmn$, and (7) is correct once (5) and (6) are.
\begin{enumerate}[{\rm (1)}]
\item[{\rm (5)}] Let $x\llmn y$ and $(x_i)_{i\in I}$ be a net $\mc{MN}$-converging to $y$. There exist $A\in\mc{M}^+(P)$ and $S\in\mc{N}^-(P)$ such that $y= \bigvee A=\bigwedge S$ and for every $a\in A$ and $s\in S$, $x_i\in \uparrow a\cap\downarrow s$ eventually. By assumption, there exist nonempty finite subsets $B$ of $A$ and $T$ of $S$ such that $\ub(B)\cap\lb(T)\sub \uparrow x$. Since $B$ and $T$ are finite, we have that $x_i\in\ub(B)\cap\lb(T)$ eventually, hence $x_i\in\uparrow x$ eventually. Conversely, assume for every net $(x_i)_{i\in I}$, $x_i\mnconv y$ implies $x_i\in\uparrow x$ eventually and let $A\in\mc{M}^+(P)$ and $S\in\mc{N}^-(P)$ such that $y= \bigvee A=\bigwedge S$. We consider the net $(x_i)_{i\in I_{AS}}$ as given in Proposition \ref{prop:net_from_MN_set}. By assumption, there exists $i_0\in I_{AS}$ such that $j\geq i$ implies $x_i\in \uparrow x$. By the construction of $(x_i)_{i\in I_{AS}}$, this implies there exist nonempty finite subsets $B$ of $A$ and $T$ of $S$ such that $\ub(B)\cap\lb(T)\subseteq \uparrow x$. We conclude that $x\llmn y$.

\item[{\rm (6)}] The proof is similar to (5).
\end{enumerate}
\end{proof}

\begin{remark}
If $\mc{M}=\Dir$ and $\mc{N}=\Filt$, then
\begin{enumerate}[{\rm (1)}]
\item $\mc{MN}$-convergence structure is exactly order-convergence structure,
\item $\llmn$ is precisely $\ll_{\mc{O}}$ given in \cite{sunli18} and \cite{wangzhao13}.
\end{enumerate}
\end{remark}

The following denotations will be needed in our ensuing development.
\begin{enumerate}
\item $ \ddamn x=\sets{y\in P\mid y\llmn x}$,
\item $ \duamn x=\sets{y\in P\mid x\llmn y}$,
\item $ \ddatrmn x=\sets{y\in P\mid y\trmn x}$,
\item $ \duatrmn x=\sets{y\in P\mid x\trmn y}$.
\end{enumerate}

In the following, we present our new notion of $\mc{MN}$-continuity of posets. It will be proven that $\mc{MN}$-continuity of a poset is equivalent with the $\mc{MN}$-convergence in it being topological.

\begin{definition}\label{def:MNcts}
A poset $P$ is said to be \emph{$\mc{MN}$-continuous} if for each $x,y\in P$, the following hold:
\begin{enumerate}[{\rm (MN1)}]
\item there exist $A\in\mc{M}^+(X)$ and $S\in\mc{N}^-(X)$ such that $A\subseteq \ddamn x$, $S\sub \duatrmn x$, and $x=\bigvee A=\bigwedge S$;
\item $\duamn x\cap \ddatrmn y\in\tau_{\mc{MN}}$.
\end{enumerate}
\end{definition}

\begin{lemma}\label{lem:MNctsimplies_top}
If $P$ is an $\mc{MN}$-continuous poset, then the $\mc{MN}$-convergence structure in $P$ is topological.
\end{lemma}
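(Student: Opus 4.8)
The plan is to lean on the definition of ``topological'' recorded in the preliminaries: since the $\mc{MN}$-convergence structure automatically generates $\tau_{\mc{MN}}$, the only inclusion I need to establish is the non-trivial one, namely that every net converging to $x$ in the topology $\tau_{\mc{MN}}$ already $\mc{MN}$-converges to $x$. So I would fix a net $(x_i)_{i\in I}$ with $(x_i)_{i\in I}\xrightarrow{\tau_{\mc{MN}}}x$ and aim to produce the witnessing $\mc{M}$-set and $\mc{N}$-set required by Definition \ref{def:MNconve}.

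The witnesses will come straight from $\mc{MN}$-continuity. By (MN1) there exist $A\in\mc{M}^+(P)$ and $S\in\mc{N}^-(P)$ with $A\subseteq\ddamn x$, $S\subseteq\duatrmn x$, and $x=\bigvee A=\bigwedge S$. These already supply the first clause of Definition \ref{def:MNconve}; what remains to check is the eventual two-sided trapping condition, that for every $a\in A$ and $s\in S$ one has $x_i\in\uparrow a\cap\downarrow s$ eventually.

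The key step invokes (MN2). Fixing $a\in A$ and $s\in S$, I would read off from $a\in\ddamn x$ that $a\llmn x$, i.e.\ $x\in\duamn a$, and from $s\in\duatrmn x$ that $x\trmn s$, i.e.\ $x\in\ddatrmn s$. Hence $x\in\duamn a\cap\ddatrmn s$, a set that is $\tau_{\mc{MN}}$-open by (MN2). Since $(x_i)_{i\in I}$ converges to $x$ in $\tau_{\mc{MN}}$, the net must lie in $\duamn a\cap\ddatrmn s$ eventually. Unwinding the definitions then gives the conclusion: $x_i\in\duamn a$ means $a\llmn x_i$, whence $a\leq x_i$ by Proposition \ref{prop:llmn_and_trmn}(1), and $x_i\in\ddatrmn s$ means $x_i\trmn s$, whence $x_i\leq s$ by Proposition \ref{prop:llmn_and_trmn}(2). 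Thus $x_i\in\uparrow a\cap\downarrow s$ eventually, exactly as needed.

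Assembling the pieces, $A$ and $S$ satisfy both clauses of Definition \ref{def:MNconve}, so $(x_i)_{i\in I}\mnconv x$, which finishes the argument. I do not anticipate a genuine obstacle: this is the precise two-sided analogue of Lemma \ref{lem:Mctsimplies_top}, with (MN1) furnishing the approximating sets and (MN2) furnishing the open neighbourhoods that force the eventual trapping. The only point demanding care is the bookkeeping among the four decorated relations---verifying that the memberships $a\in\ddamn x$ and $s\in\duatrmn x$ translate into $x$ belonging to the open set $\duamn a\cap\ddatrmn s$---but this is a matter of reading off the definitions rather than a real difficulty.
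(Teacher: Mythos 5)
Your proposal is correct and follows essentially the same route as the paper's own proof: extract $A$ and $S$ from (MN1), use (MN2) to see that $\duamn a\cap\ddatrmn s$ is a $\tau_{\mc{MN}}$-open neighbourhood of $x$, and then apply Proposition \ref{prop:llmn_and_trmn}(1) and (2) to convert eventual membership in that open set into $x_i\in\uparrow a\cap\downarrow s$ eventually. The only difference is that you spell out the bookkeeping between the decorated relations more explicitly than the paper does, which is harmless.
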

\begin{proof}
It suffices to show that for each net $(x_i)_{i\in I}$ in $P$, $(x_i)_{i\in I}\xrightarrow{\tau_{\mc{MN}}}x$ implies $(x_i)_{i\in I}\mnconv x$. Assume that $(x_i)_{i\in I}\xrightarrow{\tau_{\mc{MN}}}x$. By (MN1), there exist $A\in\mc{M}^+(X)$ and $S\in\mc{N}^-(P)$ such that $A\sub\ddamn x$, $S\sub \duatrmn x$, and $x=\bigvee A=\bigwedge S$. Let $a\in A$ and $s\in S$. Since $x\in\duamn a\cap \ddatrmn s$ and $\duamn a\cap \ddatrmn s\in\tau_{\mc{MN}}$ by (MN2), we have that $x_i\in \duamn a\cap \ddatrmn s$ eventually. Hence, by Proposition \ref{prop:llmn_and_trmn}(1) and (2), $x_i\in\uparrow a\cap\downarrow s$ eventually. We conclude that $(x_i)_{i\in I}\mnconv x$.
\end{proof}

\begin{lemma}\label{lem:iter_implies_MNcts}
Let $P$ be a poset. If the $\mc{MN}$-convergence structure in $P$ is topological then $P$ is $\mc{MN}$-continuous.
\end{lemma}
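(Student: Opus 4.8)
The plan is to follow the template of Lemma \ref{lem:iter_implies_Mcts}, upgraded to the two-sided setting. Since the $\mc{MN}$-convergence structure is assumed topological, Theorem \ref{th:Kelley_char} guarantees that it satisfies the (Iterated limits) axiom; this is the only consequence of ``topological'' that I shall use. Fixing arbitrary $x,y\in P$, I must establish (MN1) and (MN2) of Definition \ref{def:MNcts}.

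For (MN1) I would imitate the construction used for (M1), but carrying an $\mc{N}$-set alongside the $\mc{M}$-set. Let $\mc{AS}_x$ be the collection of all pairs $(A',S')$ with $A'\in\mc{M}^+(P)$, $S'\in\mc{N}^-(P)$ and $x=\bigvee A'=\bigwedge S'$; this is nonempty because $(\sets{x},\sets{x})$ belongs to it. I index $\mc{AS}_x=\sets{(A_i,S_i)\mid i\in I}$, equip $I$ with the trivial directed preorder, and form the constant net $x_i:=x$, which $\mc{MN}$-converges to $x$. For each $i$ I attach the net $(x_{i,j})_{j\in I_{A_iS_i}}$ of Proposition \ref{prop:net_from_MN_set}, $\mc{MN}$-converging to $x_i=x$. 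Applying (Iterated limits) yields an iterated net $(x_{(i,f)})_{(i,f)\in K}\mnconv x$, with $K=I\times\prod_{i}I_{A_iS_i}$, and hence witnessing sets $A\in\mc{M}^+(P)$, $S\in\mc{N}^-(P)$ with $x=\bigvee A=\bigwedge S$ and $x_{(i,f)}\in\uparrow a\cap\downarrow s$ eventually for all $a\in A$, $s\in S$. To see $A\sub\ddamn x$, fix $a\in A$ and an arbitrary pair $M\in\mc{M}^+(P)$, $N\in\mc{N}^-(P)$ with $x=\bigvee M=\bigwedge N$; this pair equals some $(A_{i_0},S_{i_0})$. Taking the eventual index $(i^*,f^*)$ for $\uparrow a$ and writing $f^*(i_0)=(u,\ub(B)\cap\lb(T))$ with $B\sub M$, $T\sub N$ finite, the familiar surgery --- replacing only the $i_0$-th coordinate of $f^*$ by $(t,\ub(B)\cap\lb(T))$ for an arbitrary $t\in\ub(B)\cap\lb(T)$ --- forces $t\geq a$, so $\ub(B)\cap\lb(T)\sub\uparrow a$ and $a\llmn x$. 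The identical argument with $\downarrow s$ in place of $\uparrow a$ gives $S\sub\duatrmn x$, completing (MN1).

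For (MN2) I would exploit that $\tau_{\mc{MN}}$ is a genuine topology and therefore closed under finite intersections; it thus suffices to prove that $\duamn x$ and $\ddatrmn y$ are \emph{each} members of $\tau_{\mc{MN}}$, whence $\duamn x\cap\ddatrmn y\in\tau_{\mc{MN}}$ follows at once. Each of these two claims is a transcription of the (M2) argument. To show $\duamn x\in\tau_{\mc{MN}}$ I use the criterion of Proposition \ref{prop:tmn_char}: given $A'\in\mc{M}^+(P)$, $S'\in\mc{N}^-(P)$ with $w:=\bigvee A'=\bigwedge S'\in\duamn x$, suppose for contradiction that every pair of nonempty finite $B\sub A'$, $T\sub S'$ admits $x_{(B,T)}\in(\ub(B)\cap\lb(T))\setminus\duamn x$. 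These form a net, indexed by pairs of finite subsets ordered by inclusion, which $\mc{MN}$-converges to $w$; since $x_{(B,T)}\notin\duamn x$ means $x\not\llmn x_{(B,T)}$, Proposition \ref{prop:llmn_and_trmn}(5) furnishes for each index a net $\mc{MN}$-converging to $x_{(B,T)}$ that fails cofinally to enter $\uparrow x$. Feeding these into (Iterated limits) produces a net $\mc{MN}$-converging to $w$; but $x\llmn w$ together with Proposition \ref{prop:llmn_and_trmn}(5) forces this net eventually into $\uparrow x$, and the coordinate-surgery contradiction then closes the argument. The statement $\ddatrmn y\in\tau_{\mc{MN}}$ is proved identically, replacing $\llmn$, $\uparrow x$ and Proposition \ref{prop:llmn_and_trmn}(5) by $\trmn$, $\downarrow y$ and Proposition \ref{prop:llmn_and_trmn}(6).

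I expect the main obstacle to be conceptual rather than computational: a naive direct proof of (MN2) runs into the fact that $x_i\notin\duamn x\cap\ddatrmn y$ is a \emph{disjunction}, so a single witnessing net per index need not consistently violate one fixed condition, and the iterated-limit contradiction cannot be driven home. The reduction to proving $\duamn x$ and $\ddatrmn y$ open \emph{separately}, justified by closure of $\tau_{\mc{MN}}$ under finite intersections, is what dissolves this difficulty. Beyond that, the only care needed is the bookkeeping in (MN1): the outer index must range over \emph{all} pairs realising $x$ simultaneously as a supremum of an $\mc{M}$-set and an infimum of an $\mc{N}$-set, so that every such $(M,N)$ is hit as some $(A_{i_0},S_{i_0})$ when verifying the defining conditions of $\llmn$ and $\trmn$.
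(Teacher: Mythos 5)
Your proof is correct, but it takes a genuinely different route from the paper's in both halves. The paper never reduces to the (Iterated limits) axiom for (MN1): it exploits topologicity directly, forming the neighbourhood net $x_{(V,c)}=c$ indexed by $\sets{(V,c)\in\tau_{\mc{MN}}\times P\mid x,c\in V}$, which $\tau_{\mc{MN}}$-converges to $x$ and hence, by hypothesis, $\mc{MN}$-converges to $x$; the witnessing pair $(A,S)$ is read off from this single convergence, and $A\sub\ddamn x$, $S\sub\duatrmn x$ follow because each $a\in A$, $s\in S$ admits an open $V\ni x$ with $V\sub\uparrow a\cap\downarrow s$, whence every net $\mc{MN}$-converging to $x$ eventually enters $\uparrow a\cap\downarrow s$ and Proposition \ref{prop:llmn_and_trmn}(7) applies. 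Similarly, in (MN2) the paper treats $\duamn x\cap\ddatrmn y$ as one set, indexing its counterexample net by the open neighbourhoods of the offending point $z$ and again using topologicity to upgrade $\tau_{\mc{MN}}$-convergence to $\mc{MN}$-convergence. Your argument instead runs entirely on (Iterated limits) (extracted via Theorem \ref{th:Kelley_char}) together with the order-theoretic criterion of Proposition \ref{prop:tmn_char}, transplanting both halves of Lemma \ref{lem:iter_implies_Mcts}; it is sound --- in (MN1) the coordinate surgery is legitimate because the two indices being compared in $I_{A_{i_0}S_{i_0}}$ carry the same set $\ub(B)\cap\lb(T)$ and are therefore mutually comparable --- and it buys a formally sharper statement than the one the paper records: the (Iterated limits) axiom alone forces $\mc{MN}$-continuity, in exact parallel with the $\mc{M}$-case, whereas the paper's proof consumes the full topologicity hypothesis.

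One correction to your closing commentary: the ``conceptual obstacle'' you invoke to justify splitting (MN2) into separate openness of $\duamn x$ and $\ddatrmn y$ does not exist. Proposition \ref{prop:llmn_and_trmn}(7) characterises the conjunction $x\llmn w\trmn y$ by a single net condition, so its negation yields one net cofinally escaping $\uparrow x\cap\downarrow y$ --- this is precisely the paper's condition (St2) --- and the usual surgery contradiction closes on the intersection directly; indeed, even a net chosen to escape only $\uparrow x$ (or only $\downarrow y$) cofinally escapes $\uparrow x\cap\downarrow y$, which is all the final contradiction requires. Your split, justified by closure of $\tau_{\mc{MN}}$ under finite intersections, is nonetheless valid; it is simply not forced.
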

\begin{proof}
Let $x$ and $y$ be arbitrary elements of $P$. We will show that (MN1) and (MN2) hold.
\begin{enumerate}[{\rm (MN1)}]
\item Define the following collection
$$I_x=\sets{(V,c)\in \tau_{\mc{MN}}\times P\mid
x,c\in V}$$
Equip $I_x$ with $\leq$ defined as $(V_1,c_1)\leq (V_2,c_2)$ if and only if $V_1\supseteq V_2$. Then $I_x$ is a directed pre-ordered set. For each $(V,c)\in I_x$, we define $x_{(V,c)}=c$ to form a net. For each $V\in\tau_{\mc{MN}}$ such that $x\in V$ we have that $(V,x)\in I_x$. For each $(W,c)\in I_x$ such that $(W,c)\geq (V,x)$ we have $x_{(W,c)}=c\in W\sub V$, hence $x_i\in V$ eventually. Therefore $x_i\xrightarrow{\tau_{\mc{MN}}}x$. Since the $\mc{MN}$-convergence in $P$ is topological, we have that $x_i\mnconv x$. Then there exists $A\in\mc{M}^+(P)$ and $S\in\mc{N}^-(X)$ such that $x=\bigvee A=\bigwedge S$ such that for every $a\in A$ and $s\in S$, $x_i\in\uparrow a\cap \uparrow s$ eventually.

We fixed $a\in A$ and $s\in S$. There exists $(V,c)\in I_x$ such that $(W,d)\geq (V,c)$ implies $x_{(W,d)}\in\uparrow a\cap \downarrow s$. Since for every $v\in V$ we have $(V,v)\geq (V,c)$, it holds that $V\sub \uparrow a\cap\downarrow s$. Now let $\brackets{y_j}_{j\in J}$ be a net such that $y_j\mnconv x$. Then $y_j\xrightarrow{\tau_{\mc{MN}}}x$. Since $x\in V\in \tau_{\mc{MN}}$, we have that $y_j\in V\sub \uparrow a\cap\downarrow s$ eventually. By Proposition \ref{prop:llmn_and_trmn}(7), we have that $a\llmn x\trmn s$. Since $a$ and $s$ are taken arbitrarily, we have that $A\sub\duamn x$ and $S\sub\duatrmn x$.
 
\item Suppose to the contrary that $\duamn x\cap \ddatrmn y\notin\tau_{\mc{MN}}$. There exists $z\in \duamn x\cap \ddatrmn y$ such that for every $V\in I:=\sets{V\in\tau_{\mc{MN}}\mid z\in V}$ there exists $x_V\in V$ such that $x_V\notin \duamn x\cap \ddatrmn y$. Equipping $I$ with reverse inclusion order we have that the net $\brackets{x_V}_{V\in I}$ converges to $z$ with respect to $\tau_{\mc{MN}}$. Since the $\mc{MN}$-convergence structure in $P$ is topological, we have that $x_V\mnconv z$.

For each $V\in I$, since $x_V\notin \duamn x\cap\ddatrmn y$, by Proposition \ref{prop:llmn_and_trmn}(7) there exists a net $\brackets{x_{V,j}}_{j\in J(V)}$ such that $x_{j,V}\mnconv x_V$ and 
\begin{equation*}\label{contr}
\forall j\in J(V).~\exists j^*\in J(V).~j^*\geq j\t{ and }x_{V,j^*}\notin\uparrow x\cap\downarrow y\qquad{\rm (St2)}
\end{equation*}
By the fact that $\mc{M}$-convergence structure in $P$ satisfies (Iterated limits) axiom, we have that
$$(x_{(V,f)})_{(V,f)\in K}\mnconv z$$
where $K:=I\times \Pi_{V\in I}J(V)$ is ordered by pointwise order and $x_{(V,f)}=x_{V,f(V)}$ for every $(V,f)\in K$.
Since $z\in\duamn x\cap\ddatrmn y$, by Proposition \ref{prop:llmn_and_trmn}(7), there exists $(V_0,f_0)\in K$ such that for every $(V,f)\in K$, $(V,f)\geq (V_0,f_0)$ implies $x\leq x_{V,f(V)}\leq y$. Let $f_0(V_0)=j\in J(V_0)$. Let $j^*\in J(V_0)$ satisfy (St2). Define $f\in\Pi_{V\in I}J(V)$ as follows
$$
f(V)=\left\{
\begin{array}{ll}
f_0(V),&\text{if }V\neq V_0\\
j^*, &\text{if }V=V_0
\end{array}
\right.
$$
We have that $(V_0,f)\geq (V_0,f_0)$. Hence $x\leq x_{V_0,f(V_0)}=x_{V_0,j*}\leq y$, which contradicts (St2). Therefore we have that $\duamn x\cap\ddatrmn y\in\tau_{\mc{MN}}$.
\end{enumerate}
\end{proof}

The following theorem is an immediate consequence of Lemmas \ref{lem:MNctsimplies_top} and \ref{lem:iter_implies_MNcts}.
\begin{theorem}\label{th:MNcts_equiv}
The following statements are equivalent for a poset $P$.
\begin{enumerate}
\item[{\rm (1)}] $P$ is $\mc{MN}$-continuous.
\item[{\rm (2)}] The $\mc{MN}$-convergence structure in $P$ is topological.
\end{enumerate}
\end{theorem}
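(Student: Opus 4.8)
The plan is to establish Theorem~\ref{th:MNcts_equiv} as a direct corollary of the two preceding lemmas, so no substantive new argument is needed---the theorem simply packages Lemma~\ref{lem:MNctsimplies_top} and Lemma~\ref{lem:iter_implies_MNcts} into a biconditional. First I would observe that the implication $(1)\Rightarrow(2)$ is exactly Lemma~\ref{lem:MNctsimplies_top}: if $P$ is $\mc{MN}$-continuous, then the $\mc{MN}$-convergence structure in $P$ is topological. For the reverse implication $(2)\Rightarrow(1)$, I would invoke Lemma~\ref{lem:iter_implies_MNcts}, which states that if the $\mc{MN}$-convergence structure in $P$ is topological, then $P$ is $\mc{MN}$-continuous. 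Chaining these two one-line implications yields the claimed equivalence.

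The only point requiring a moment's care is to confirm that the two lemmas are genuinely converse to one another with no hidden gap between their hypotheses. Lemma~\ref{lem:MNctsimplies_top} assumes $\mc{MN}$-continuity and concludes topologicality, while Lemma~\ref{lem:iter_implies_MNcts} assumes topologicality and concludes $\mc{MN}$-continuity; since the intermediate notions match verbatim, the biconditional closes cleanly. I would also note, for completeness, that this mirrors the structure of Theorem~\ref{th:Mcts_equiv} in the $\mc{M}$-convergence case, where the analogous pair of lemmas (Lemma~\ref{lem:Mctsimplies_top} and Lemma~\ref{lem:iter_implies_Mcts}) were combined in the same fashion.

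There is essentially no obstacle here: all the work has already been done in proving the two lemmas, and in particular the hard direction---extracting $\mc{MN}$-continuity from topologicality via the Kelley characterisation (Theorem~\ref{th:Kelley_char}) and the (Iterated limits) axiom---was carried out in Lemma~\ref{lem:iter_implies_MNcts}. Accordingly, the proof I would write is simply: ``This follows immediately by combining Lemmas~\ref{lem:MNctsimplies_top} and~\ref{lem:iter_implies_MNcts}.'' If a referee wanted more, I would spell out the two implications explicitly as above, but the statement itself already advertises that it is ``an immediate consequence'' of the two lemmas, so the one-sentence proof is entirely appropriate.
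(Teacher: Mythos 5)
Your proposal is correct and is exactly the paper's own proof: the paper states the theorem as ``an immediate consequence of Lemmas \ref{lem:MNctsimplies_top} and \ref{lem:iter_implies_MNcts}'', which give the two implications $(1)\Rightarrow(2)$ and $(2)\Rightarrow(1)$ respectively. Your check that the hypotheses and conclusions of the two lemmas match up verbatim is the only verification needed, so the one-sentence proof you propose is precisely what the paper does.
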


Notice that considering $\Dir$ and $\Filt$ as minimal subset selections, we have that the $(\Dir\Filt)$-convergence structures in $P$ is exactly the order-convergence structure in $P$. Hence as a corollary of Theorem \ref{th:MNcts_equiv} above, we have the following result.
\begin{corollary}\label{cor:order-conv_is_top}
Let $P$ be a poset. Then the order-convergence structure in $P$ is topological if and only $P$ is a $(\Dir\Filt)$-continuous poset.
\end{corollary}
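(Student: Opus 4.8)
The plan is to obtain this result as a direct specialisation of Theorem \ref{th:MNcts_equiv}, applied to the two minimal subset selections $\mc{M} = \Dir$ and $\mc{N} = \Filt$. There is no genuinely new content to prove; the work consists entirely in checking that the hypotheses of Theorem \ref{th:MNcts_equiv} are met and that the relevant notions specialise correctly.

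First I would verify that $\Dir$ and $\Filt$ genuinely qualify as minimal subset selections. Under the convention that directed and filtered sets are nonempty, neither $\Dir(P)$ nor $\Filt(P)$ contains $\varnothing$, and every singleton $\sets{x}$ is simultaneously directed and filtered, so both collections contain all singletons. This is precisely why $\Dir$ and $\Filt$ already appear among the examples of minimal subset selections listed in Section 3, so this step is immediate.

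Next I would unwind Definition \ref{def:MNconve} for this particular choice. Here $\mc{M}^+(P)$ consists of the directed subsets admitting a supremum and $\mc{N}^-(P)$ consists of the filtered subsets admitting an infimum. Thus a net $(x_i)_{i\in I}$ $\mc{MN}$-converges to $x$ exactly when there are a directed set $A$ and a filtered set $S$ with $x = \bigvee A = \bigwedge S$ such that, for every $a\in A$ and $s\in S$, one has $x_i \in \uparrow a \cap \downarrow s$ eventually. Since $\uparrow a \cap \downarrow s = \sets{t\mid a\leq t\leq s}$, this is literally the condition $a\leq x_i\leq s$ eventually. Comparing with the definition of order-convergence recalled in the Introduction (with $D=A$ and $F=S$), the two conditions coincide verbatim. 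Hence the $(\Dir\Filt)$-convergence structure equals the order-convergence structure, as already recorded in the remark following Proposition \ref{prop:llmn_and_trmn}.

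Finally, because the two convergence structures are the same class of pairs, the order-convergence structure in $P$ is topological if and only if the $(\Dir\Filt)$-convergence structure is topological. By Theorem \ref{th:MNcts_equiv} the latter holds if and only if $P$ is $\mc{MN}$-continuous with $\mc{M}=\Dir$ and $\mc{N}=\Filt$, which is by definition exactly $(\Dir\Filt)$-continuity. Chaining these equivalences yields the corollary. I do not anticipate any real obstacle; the only care needed is the bookkeeping that the eventual-membership formulation $x_i\in\uparrow a\cap\downarrow s$ matches the order-convergence inequality $a\leq x_i\leq s$, which holds on the nose.
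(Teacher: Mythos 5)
Your proposal is correct and follows exactly the paper's route: the paper likewise observes (in the remark after Proposition \ref{prop:llmn_and_trmn} and in the sentence preceding the corollary) that the $(\Dir\Filt)$-convergence structure coincides verbatim with order-convergence, and then invokes Theorem \ref{th:MNcts_equiv}. Your extra bookkeeping---checking that $\Dir$ and $\Filt$ are minimal subset selections and that $x_i\in\uparrow a\cap\downarrow s$ eventually matches $a\leq x_i\leq s$ eventually---is exactly the implicit content of the paper's one-line justification.
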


We recall the following definition from \cite{sunli18}.

\begin{definition}
Let $P$ be a poset and $\mc{O}$ be the order-convergence structure in $P$. We call $P$ is \e{$\mc{R}^*$-doubly continuous} if for every $x,y,z\in P$ the following hold:
\begin{enumerate}[{\rm (R1)}]
\item $\dda_{\mc{O}}x\in\Dir^+(P)$, $\duatr_{\mc{O}}x\in\Filt^-(P)$, and $x=\bigvee \dda_{\mc{O}}x=\bigwedge \duatr_{\mc{O}}x$;
\item $x\ll_{\mc{O}}y\triangleleft_{\mc{O}}z$ implies there exist $a,s\in P$ such that $a\ll_{\mc{O}} x\triangleleft_{\mc{O}}s$ and $\uparrow a\cap\downarrow s\sub \dua_{\mc{O}}y\cap \ddatr_{\mc{O}}z$.
\end{enumerate}
\end{definition}

As expected, specialising $\mc{M}$ and $\mc{N}$ to $\Dir$ and $\Filt$, we have that $\mc{MN}$-continuity is exactly $\mc{R}^*$-doubly continuity.

\begin{proposition}\label{prop:Rcts_equiv_DirFiltcts}
A poset $P$ is $\mc{R}^*$-doubly continuous if and only if $P$ is $(\Dir\Filt)$-continuous. 
\end{proposition}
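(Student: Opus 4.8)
The plan is to verify that, after translating everything into a common notation, $(\Dir\Filt)$-continuity and $\mc{R}^*$-doubly continuity coincide condition by condition: (MN1)$\Leftrightarrow$(R1) and, in the presence of either of these, (MN2)$\Leftrightarrow$(R2). First I would set up the dictionary. Specialising $\mc{M}=\Dir$ and $\mc{N}=\Filt$, the Remark following Proposition~\ref{prop:llmn_and_trmn} identifies $\mc{MN}$-convergence with order-convergence and $\llmn$ with $\ll_{\mc{O}}$; since $\trmn$ and $\triangleleft_{\mc{O}}$ are both characterised by the same net condition (Proposition~\ref{prop:llmn_and_trmn}(6)), they agree as well, so that $\tau_{\mc{MN}}=\tau_{\mc{O}}$ and $\ddamn x=\dda_{\mc{O}}x$, $\duamn x=\dua_{\mc{O}}x$, $\duatrmn x=\duatr_{\mc{O}}x$, $\ddatrmn x=\ddatr_{\mc{O}}x$. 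Hence the four sets appearing in (R1), (R2) are exactly those in (MN1), (MN2), and it suffices to compare the conditions directly.

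The implication (R1)$\Rightarrow$(MN1) is immediate: take $A=\ddamn x$ and $S=\duatrmn x$. The substance is (MN1)$\Rightarrow$(R1), i.e.\ upgrading ``there is a directed $A\subseteq\ddamn x$ with $\bigvee A=x$'' to ``$\ddamn x$ is itself directed with supremum $x$'' (and dually for $\duatrmn x$). The supremum statement is free: since $A\subseteq\ddamn x\subseteq\downarrow x$ by Proposition~\ref{prop:llmn_and_trmn}(1), we get $\bigvee\ddamn x=\bigvee A=x$. For directedness I would take $b_1,b_2\in\ddamn x$ and apply the definition of $\llmn$ to the \emph{given} sets $A$ and $S$: there are nonempty finite $B_k\subseteq A$ and $T_k\subseteq S$ with $\ub(B_k)\cap\lb(T_k)\subseteq\uparrow b_k$. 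Using that $A$ is directed, choose $a\in A$ with $a\in\ub(B_1\cup B_2)$. The key observation is that every element of $A$ lies below $x$ and, by Proposition~\ref{prop:llmn_and_trmn}(2), every element of $S$ lies above $x$; hence $a\le x\le t$ for all $t\in S$, so $a\in\lb(T_k)$. Therefore $a\in\ub(B_k)\cap\lb(T_k)\subseteq\uparrow b_k$, giving $b_1,b_2\le a\in A\subseteq\ddamn x$. Thus $\ddamn x$ is directed, so $\ddamn x\in\Dir^+(P)$; the dual argument gives $\duatrmn x\in\Filt^-(P)$ with $\bigwedge\duatrmn x=x$, establishing (R1).

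With (R1)$\Leftrightarrow$(MN1) secured I would prove (MN2)$\Leftrightarrow$(R2). For (R2)$\Rightarrow$(MN2) I would test openness of $\duamn x\cap\ddatrmn y$ against Proposition~\ref{prop:tmn_char}: given $A\in\Dir^+(P)$ and $S\in\Filt^-(P)$ with $w:=\bigvee A=\bigwedge S$ and $x\llmn w\trmn y$, feed the chain into (R2) to obtain interpolating elements together with a band of the form $\uparrow a\cap\downarrow s\subseteq\duamn x\cap\ddatrmn y$, using the transitivity facts $a\le x\llmn w\Rightarrow a\llmn w$ and $\duamn w\subseteq\duamn x$, both of which fall straight out of the net characterisation in Proposition~\ref{prop:llmn_and_trmn}(5). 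I would then convert the band into the finite form demanded by Proposition~\ref{prop:tmn_char}: applying the definitions of $\llmn$ and $\trmn$ to the \emph{specific} $A$ and $S$ produces finite $B\subseteq A$ and $T\subseteq S$ with $\ub(B)\cap\lb(T)\subseteq\uparrow a\cap\downarrow s$, after replacing $B,T$ by finite unions so that the $\llmn$- and $\trmn$-constraints hold at once. The converse (MN2)$\Rightarrow$(R2) runs the correspondence backwards: from a chain $x\llmn w\trmn y$ and the openness of $\duamn x\cap\ddatrmn y$, one uses (R1) to present $w$ as $\bigvee\ddamn w=\bigwedge\duatrmn w$ and reads the interpolants required by (R2) out of the finite witnesses supplied by Proposition~\ref{prop:tmn_char}.

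The step I expect to be the main obstacle is precisely this (MN2)$\Leftrightarrow$(R2) equivalence. Condition (MN2) is stated topologically as openness, quantifying over \emph{all} $\mc{M}$- and $\mc{N}$-sets witnessing a given supremum/infimum, whereas (R2) is a pointwise interpolation statement; the real work is translating faithfully between the two. In particular one must keep the $\dua_{\mc{O}}/\ddatr_{\mc{O}}$ bookkeeping straight when passing between this paper's arrow conventions and those of \cite{sunli18}, and must arrange the finite subsets of $A$ and $S$ so that a single band $\ub(B)\cap\lb(T)$ simultaneously certifies both halves of the intersection $\duamn x\cap\ddatrmn y$.
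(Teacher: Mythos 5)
Your decomposition into (MN1)$\Leftrightarrow$(R1) and (MN2)$\Leftrightarrow$(R2) is exactly the paper's strategy, and your (MN1)$\Leftrightarrow$(R1) argument is correct and essentially the paper's own (the paper tests $\ll_{\mc{O}}$ against the witnessing directed set paired with $\sets{x}$ where you use the witnessing pair $(A,S)$; both work). The genuine gap is in your direction (R2)$\Rightarrow$(MN2). You take (R2) literally as printed, so feeding in the chain $x\llmn w\trmn y$ yields $a\llmn x$, $x\trmn s$, and $\uparrow a\cap\downarrow s\sub\duamn w\cap\ddatrmn y$; your two ``transitivity facts'' are precisely the patches that upgrade $a\llmn x$ to $a\llmn w$ and $\duamn w$ to $\duamn x$. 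But no analogous patch exists on the triangle side: to produce the finite $B\sub A$ and $T\sub S$ with $\ub(B)\cap\lb(T)\sub\downarrow s$ demanded by Proposition \ref{prop:tmn_char}, you must apply the definition of $\trmn$ to the specific pair $(A,S)$ whose supremum and infimum equal $w$, i.e.\ you need $w\trmn s$, whereas you only have $x\trmn s$. By Proposition \ref{prop:llmn_and_trmn}(6), passing from $x\trmn s$ to $w\trmn s$ would require controlling nets that $\mc{MN}$-converge to $w$ by a hypothesis about nets converging to $x$; there is no such implication ($\trmn$, unlike $\llmn$, cannot be moved along $x\leq w$ in its left argument). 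So the finite-form conversion, which is the crux of this direction, does not go through as sketched.

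The underlying problem is that (R2) as printed in the paper has $x$ and $y$ interchanged in its conclusion, and read literally it is degenerate: since $a\ll_{\mc{O}}x\triangleleft_{\mc{O}}s$ forces $a\leq x\leq s$, the point $x$ itself lies in the band, so $\uparrow a\cap\downarrow s\sub\dua_{\mc{O}}y$ gives $y\ll_{\mc{O}}x$ and hence $x=y$. Thus the literal (R2) already fails in $P=[0,1]$, where $0\ll_{\mc{O}}\tfrac{1}{2}\triangleleft_{\mc{O}}1$ by Proposition \ref{prop:llmn_and_trmn}(3),(4), even though $[0,1]$ is easily checked to be $(\Dir\Filt)$-continuous; under that reading the proposition is false and no argument can close the gap. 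The intended condition of \cite{sunli18}, which the paper's proof uses, interpolates around the middle element of the chain: $x\ll_{\mc{O}}y\triangleleft_{\mc{O}}z$ implies there exist $a,s$ with $a\ll_{\mc{O}}y\triangleleft_{\mc{O}}s$ and $\uparrow a\cap\downarrow s\sub\dua_{\mc{O}}x\cap\ddatr_{\mc{O}}z$. Notice that your own converse direction (MN2)$\Rightarrow$(R2) in fact produces exactly this corrected form --- your interpolants are read off $\ddamn w$ and $\duatrmn w$ and your band lands in $\duamn x\cap\ddatrmn y$ --- so as written your two directions address two different conditions. Once (R2) is read in the corrected form throughout, both directions of your outline go through, the transitivity patches become unnecessary (the band and the relations $a\llmn w\trmn s$ come for free), and your proof coincides with the paper's.
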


As a consequence of Corollary \ref{cor:order-conv_is_top} and Proposition \ref{prop:Rcts_equiv_DirFiltcts} we have the following previously-established result.
\begin{corollary}\cite{sunli18}\label{th:order-conv_is_top}
The order-convergence structure in a poset $P$ is topological if and only if $P$ is $\mc{R}^*$-doubly continuous.
\end{corollary}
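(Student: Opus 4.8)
The plan is to exploit that, on specialising $\mc{M}=\Dir$ and $\mc{N}=\Filt$, the two auxiliary relations coincide with the order-theoretic ones, namely $\llmn\,=\,\ll_{\mc{O}}$ and $\trmn\,=\,\triangleleft_{\mc{O}}$, so that $\ddamn x=\dda_{\mc{O}}x$, $\duamn x=\dua_{\mc{O}}x$, $\ddatrmn x=\ddatr_{\mc{O}}x$, $\duatrmn x=\duatr_{\mc{O}}x$, and $\tau_{\mc{MN}}=\tau_{\mc{O}}$. Under this dictionary the two properties become directly comparable, and I would match (MN1) against (R1) and (MN2) against (R2) in each direction. The recurring technical device is the translation between the ``basic cones'' $\ub(B)\cap\lb(T)$ with $B,T$ finite (the sets governing membership in $\tau_{\mc{MN}}$ via Proposition \ref{prop:tmn_char}) and single ``double cones'' $\uparrow a\cap\downarrow s$.

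For the implication $\mc{R}^*$-doubly continuous $\Rightarrow$ $(\Dir\Filt)$-continuous, (MN1) is essentially immediate: (R1) asserts that $\dda_{\mc{O}}x\in\Dir^{+}(P)$ and $\duatr_{\mc{O}}x\in\Filt^{-}(P)$ already have supremum and infimum $x$, so one takes $A:=\ddamn x$ and $S:=\duatrmn x$ as the witnesses. The substantive part is (MN2), which I would establish through the criterion of Proposition \ref{prop:tmn_char} for the set $V:=\duamn y\cap\ddatrmn z$: given $A\in\Dir^{+}(P)$ and $S\in\Filt^{-}(P)$ with $w:=\bigvee A=\bigwedge S\in V$, so that $y\llmn w\trmn z$, apply (R2) to produce a double cone $\uparrow a\cap\downarrow s\sub V$ with $a\llmn w\trmn s$; then, applying the definition of $\llmn$ to $a\llmn w=\bigvee A=\bigwedge S$ and of $\trmn$ to $w\trmn s$, and taking the unions of the resulting finite sets, I obtain finite $B\sub A$ and $T\sub S$ with $\ub(B)\cap\lb(T)\sub\uparrow a\cap\downarrow s\sub V$, giving $V\in\tau_{\mc{MN}}$.

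For the converse, (R1) is the non-trivial half of the correspondence with (MN1). Here (MN1) supplies only some directed $A\sub\ddamn x$ and filtered $S\sub\duatrmn x$ with the correct supremum and infimum, whereas (R1) requires the whole set $\dda_{\mc{O}}x$ to be directed (dually for $\duatr_{\mc{O}}x$). To upgrade, I would pick $x_{1},x_{2}\in\dda_{\mc{O}}x$ and apply the definition of $\llmn$ to each $x_{i}\llmn x$ against the representation $x=\bigvee A=\bigwedge\sets{x}$, legitimate since $\sets{x}\in\Filt^{-}(P)$; this yields finite $B_{i}\sub A$ with $\ub(B_{i})\cap\downarrow x\sub\uparrow x_{i}$, and directedness of $A$ then produces a single $a\in A\sub\dda_{\mc{O}}x$ above $B_{1}\cup B_{2}$, whence $x_{1},x_{2}\le a$ and $\bigvee\dda_{\mc{O}}x=x$. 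With (R1) in hand, (R2) follows from (MN2): for $x\llmn y\trmn z$ the relevant intersection $\duamn x\cap\ddatrmn z$ is $\tau_{\mc{MN}}$-open and contains $y$, so feeding the now-directed $\dda_{\mc{O}}y$ and filtered $\duatr_{\mc{O}}y$ into Proposition \ref{prop:tmn_char} gives finite $B\sub\dda_{\mc{O}}y$ and $T\sub\duatr_{\mc{O}}y$ with $\ub(B)\cap\lb(T)$ inside that open set; directedness and filteredness collapse $B,T$ to single elements $a\in\dda_{\mc{O}}y$ and $s\in\duatr_{\mc{O}}y$, whose double cone $\uparrow a\cap\downarrow s\sub\ub(B)\cap\lb(T)$ supplies the two elements required by (R2).

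I expect the main obstacle to be exactly the (MN2)$\leftrightarrow$(R2) correspondence: (R2) is an order-theoretic interpolation statement about double cones, while (MN2) is a topological openness assertion, and bridging them in both directions rests on the two conversions above — shrinking a basic cone $\ub(B)\cap\lb(T)$ to a double cone $\uparrow a\cap\downarrow s$ (using directedness of $\dda_{\mc{O}}$ and filteredness of $\duatr_{\mc{O}}$ guaranteed by (R1)), and conversely rounding a double cone back into a basic cone (using the definitions of $\llmn$ and $\trmn$ together with the union-of-finite-witnesses trick). The delicate bookkeeping is keeping track of which element plays the role of the centre $w=\bigvee A=\bigwedge S$ and ensuring each one-sided localisation legitimately invokes $\sets{x}\in\Dir^{+}(P)\cap\Filt^{-}(P)$; by contrast, the (MN1)$\leftrightarrow$(R1) matching is routine once directedness of $\dda_{\mc{O}}x$ has been secured.
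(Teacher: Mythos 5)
Your proposal is correct and takes essentially the same route as the paper: the paper deduces the corollary from Corollary \ref{cor:order-conv_is_top} (Theorem \ref{th:MNcts_equiv} specialised to $\mc{M}=\Dir$, $\mc{N}=\Filt$) together with Proposition \ref{prop:Rcts_equiv_DirFiltcts}, whose proof matches yours step for step — (MN1) from (R1) directly, (MN2) from (R2) via the union-of-finite-witnesses conversion of $\uparrow a\cap\downarrow s$ into $\ub(B)\cap\lb(T)$, directedness of $\dda_{\mc{O}}x$ via $x=\bigvee A=\bigwedge\sets{x}$ with $\sets{x}\in\Filt^-(P)$, and (R2) from (MN2) by collapsing $\ub(B)\cap\lb(T)$ to a double cone using the directedness and filteredness secured in (R1). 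The only point worth noting is that you read (R2) in its intended form from \cite{sunli18} (interpolants $a\llmn w\trmn s$ around the middle element, with $\uparrow a\cap\downarrow s\sub\duamn y\cap\ddatrmn z$), thereby silently correcting a variable swap in the paper's printed statement of (R2), whose literal reading would force the degenerate conclusion $x=y$.
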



\begin{thebibliography}{99}

\bibitem{andradiho17} Andradi, H. and Ho, W.K. Topological Scott Convergence Theorem. \i{arXiv preprint arXiv}:1710.03115 (2017).


\bibitem{bar96} Baranga, A. Z-continuous posets. \i{Discrete Mathematics} 152, no. 1-3 (1996): 33-45.



\bibitem{redbook} Gierz, G., Hofmann, K.H., Keimel, K., Lawson, J.D., Mislove, M.W. , and Scott, D.S. \textit{Continuous Lattices and Domains}. Number 93 in Encyclopedia of Mathematics and its Applications, Cambridge University Press, Cambridge, 2003.

\bibitem{gl} Goubault-Larrecq, J. \textit{Non-Hausdorff Topology and Domain Theory}, Number 22 in New Mathematical Monographs, Cambridge University Press, 2013.

\bibitem{kel} Kelley, J.L. \textit{General topology}, Courier Dover Publications, 2017.

\bibitem{ole99} Olej\v{c}ek, V. Order convergence and order topology on a poset. \i{International journal of theoretical physics} 38, no. 2 (1999): 557-561.

\bibitem{sunli18} Sun, T. and Li, Q. Characterization of posets for order-convergence being topological. \i{Mathematica Slovaca} 68, no. 1 (2018): 11-20.

\bibitem{sunliguo16} Sun, T., Li, Q., and Guo, L. Birkhoff's order-convergence in partially ordered sets.\i{Topology and its Applications} 207 (2016): 156-166.

\bibitem{ven86} Venugopalan, P. Z-continuous posets. In \i{Houston J. Math}. 1986.

\bibitem{wangzhao13} Wang, K. and Zhao, B. Some further results on order-convergence in posets. \i{Topology and its Applications} 160, no. 1 (2013): 82-86.

\bibitem{wolk61} Wolk, E. S. On order-convergence. \i{Proceedings of the American Mathematical Society} 12, no. 3 (1961): 379-384.

\bibitem{zhangli17} Zhang, Z. and Li, Q. A generalization of the Dedekind–MacNeille completion. In \i{Semigroup Forum}, pp. 1-12. Springer US, 2017.

\bibitem{zhao92} Zhao, D. \textit{Generalizations of continuous lattices and frames}. PhD diss., University of Cambridge, 1992.

\bibitem{zhao15} Zhao, D. Closure spaces and completions of posets. In \i{Semigroup Forum}, vol. 90, no. 2, pp. 545-555. Springer US, 2015.

\bibitem{zhaoli06} Zhao, B. and Li, J. $O_2$-convergence in posets. \i{Topology and its Applications} 153, no. 15 (2006): 2971-2975.

\bibitem{zhaolu17} Zhao, B., Lu, J., and Wang, K. \textit{Convergences in $T_0$ spaces}, A Presentation in Workshop on Topology and Order Structures, Singapore, 2018.

\bibitem{zhaowang11} Zhao, B. and Wang, K. Order topology and bi-Scott topology on a poset. \i{Acta Mathematica Sinica, English Series} 27, no. 11 (2011): 2101.

\bibitem{zhaozhao05} Zhao, B. and Zhao, D. Lim-inf convergence in partially ordered sets. \i{Journal of mathematical analysis and applications} 309, no. 2 (2005): 701-708.

\bibitem{zhouzhao07} Zhou, Y. and Zhao, Bin. Order-convergence and lim-inf$_\mc{M}$-convergence in posets. \i{Journal of mathematical analysis and applications} 325, no. 1 (2007): 655-664.

\end{thebibliography}
\end{document}